\theoremstyle{remark}
\newtheorem{remark}{Remark}[section]
\theoremstyle{plain}
\newtheorem{theorem}[remark]{Theorem}
\newtheorem{proposition}[remark]{Proposition}
\newtheorem{lemma}[remark]{Lemma}
\theoremstyle{definition}
\let\ge=\varepsilon
\newcommand{\slap}{\left(-\Delta \right)^s}
\begin{document}
    \title{Ground state solutions for nonlinear fractional Schr\"{o}dinger equations in $\mathbb{R}^N$}\thanks{Partially supported by PRIN 2009 ``Teoria dei punti critici
e metodi perturbativi per equazioni differenziali nonlineari''.}
    \author{Simone Secchi}
    \affiliation{Dipartimento di Matematica e Applicazioni, Universit\`a di Milano-Bicocca, via Roberto Cozzi 53, I-20125 Milano}
    \email{Simone.Secchi@unimib.it}
    \homepage{http://www.matapp.unimib.it/~secchi}
    
    \date{\today} % no date

\begin{abstract}
We construct solutions to a class of Schr\"{o}dinger equations involving the fractional laplacian. Our approach is variational in nature, and based on minimization on the Nehari manifold.
\end{abstract}

\keywords{Fractional laplacian, Nehari manifold.}
\maketitle

%\vspace{5mm}

%\noindent \emph{Keywords:} Perturbation methods, pseudo-relativistic Hartree equation, fractional laplacian.

%\noindent \emph{AMS Subject Classification:} 35Q55, 35A15, 35J20

\section{Introduction}

In this paper we will study standing waves for a nonlinear differential equation driven by the fractional laplacian. We will focus on the so-called \emph{fractional Schr\"{o}dinger equation}
\begin{equation}\label{eq:1}
\mathrm{i} \frac{\partial \psi}{\partial t} = \slap \psi + V(x) \psi - \left|\psi \right|^{p-1} \psi
\end{equation}
where $(x,t) \in \mathbb{R}^N \times (0,+\infty)$, $0<s<1$, and $V \colon \mathbb{R}^N \to \mathbb{R}$ is an external potential function. The operator $\slap$ is the \emph{fractional laplacian of order} $s$, see the next section for a short review of its properties. 

This equation was introduced by Laskin (see \cite{Laskin2002,Laskin2000}), and comes from an expansion of the Feynman path integral from Brownian-like to L\'evy-like quantum mechanical paths.
When $s=1$, the L\'evy dynamics becomes the Brownian dynamics, and
equation (\ref{eq:1}) reduces to the classical Schr\"{o}dinger equation
\[
\mathrm{i} \frac{\partial \psi}{\partial t} = -\Delta \psi + V(x) \psi - \left|\psi \right|^{p-1} \psi.
\]
Standing wave solutions to this equation are solutions of the form
\begin{equation}\label{eq:2}
\psi(x,t)=\mathrm{e}^{-\mathrm{i} \omega t} u(x),
\end{equation}
where $u$ solves the elliptic equation
\[
-\Delta u + V(x) u - \left|u \right|^{p-1} u=0.
\] 
The mathematical literature for the nonlinear Schr\"{o}dinger equation is so huge that we do not even try to collect here a detailed bibliography; we only cite \cite{Rabinowitz}, from which we will borrow some ideas that have become classical over the years.
On the contrary, to the best of our knowledge, the literature for fractional Schr\"{o}dinger equations is still expanding and rather young. 

\medskip

In the sequel we will look for standing wave solutions of a more general equation than (\ref{eq:1}), and precisely we will solve
\begin{equation} \label{eq:3}
\slap u + V(x) u = f(x,u), \quad x \in \mathbb{R}^N.
\end{equation}
In \cite{FQT} Felmer \emph{et al.} studied a similar class of equations, in which $V$ is a positive constant, say $1$, and the nonlinearity $f$ satisfies suitable assumptions; roughly speaking, as $|x| \to +\infty$, $f(x,s)$ behaves like a continuous function $\bar{f}(s)$, uniformly with respect to bounded values of $s$. Using Critical Point Theory, classical positive solutions are found, and some interesting results in regularity theory are offered.

In \cite{Cheng}, the potential $V$ is allowed to vary, but the nonlinearity is a pure power $f(s)=|s|^{p-1}s$. Ground states are found by imposing a \emph{coercivity} assumption on $V$, i.e.
\[
\lim_{|x| \to +\infty} V(x) = +\infty.
\]
In \cite{DPV}, the authors look for \emph{radially symmetric} solutions of (\ref{eq:3}) when $V$ and $f$ do not depend explicitly on the space variable~$x$.

In the very particular case of dimension $N=1$, much more can be said for the autonomous equation 
\[
\slap u + u = |u|^{p-1}u \quad\text{in $\mathbb{R}$},
\]
and we refer to \cite{FL}. However, the techniques of that paper cannot be easily adapted to the same equation in general dimension, since they depend strongly on the separation of eigenvalues of the linearized operator.

\medskip

We will first provide a generalization of the main result of \cite{Cheng} for the general equation (\ref{eq:3}), and then we will give some existence result under weaker assumptions on the behavior of the potential $V$ at infinity. Finally, by exploiting some non-trivial tools for the fractional laplacian, we will solve a \emph{perturbed} equation resembling (\ref{eq:3}).

\section{Some background on the fractional laplacian}

In this section we collect some information to be used in the paper. We will denote either by $\hat{u}$ or by $\mathcal{F}u$ the usual Fourier transform of $u$. For the sake of simplicity, integrals over the whole $\mathbb{R}^N$ will be often written $\int$.

Sobolev spaces of fractional order are the convenient setting for our equations. 
A very complete introduction to fractional Sobolev spaces can be found in \cite{DNPV}; we offer below a short review.

We recall that the fractional Sobolev space $W^{s,p}(\mathbb{R}^N)$ is defined for any $p \in [1,+\infty)$ and $s \in (0,1)$ as
\[
W^{s,p}(\mathbb{R}^N) = \left\{ u \in L^p(\mathbb{R}^N) \mid \int \frac{|u(x)-u(y)|^p}{|x-y|^{sp + N}}\, dx \, dy < \infty \right\}.
\]
This space is endowed with
the Gagliardo norm
\[
\|u\|_{W^{s,p}} = \left( \int |u|^p \, dx + \int \frac{|u(x)-u(y)|^p}{|x-y|^{sp + N}}\, dx \, dy \right)^{\frac{1}{p}}.
\]
When $p=2$, these spaces are also denoted by $H^s(\mathbb{R}^N)$.

If $p=2$, an equivalent definition of fractional Sobolev spaces is possible, based on Fourier analysis. Indeed, it turns out that
\[
W^{s,2}(\mathbb{R}^N) = \left\{ u \in L^2 (\mathbb{R}^N) \mid \int \left( 1 + |\xi|^{2s} \right) |\hat{u}(\xi)|^2 \, d\xi < \infty \right\},
\]
and the norm can be equivalently written
\[
\|u\|_{W^{s,2}} = \sqrt{\|u\|_{L^2}^2 + \int |\xi|^{2s} |\hat{u}(\xi)|^2 \, d\xi}.
\]
The homogeneous fractional Sobolev space $D^{s,2}(\mathbb{R}^N)$, also denoted by $H_0^2(\mathbb{R}^N)$ or by $\dot{H}^s(\mathbb{R}^N)$,  is defined as the completion of $C_0^\infty(\mathbb{R}^N)$ with respect to the norm
\[
\|u\|_{W_0^{s,p}}^2=\int |\xi|^{2s} |\hat{u}(\xi)|^2\, d\xi;
\]
it can also be characterized as the space
\[
D^{s,2}(\mathbb{R}^N) = \left\{ u\in L^{2^*}(\mathbb{R}^N) \mid |\xi|^{s/2}\hat{u}(\xi) \in L^2 (\mathbb{R}^N) \right\}.
\]
%\footnote{$W_0^{s,p}(\mathbb{R}^N)$ is another popular notation, but we prefer the symbol $D^{s,p}(\mathbb{R}^N)$, which is commonly used when $s \in \mathbb{N}$.} 

\medskip

The fractional laplacian $\slap$ of a rapidly decaying test function $u$ is defined as
\[
\slap u (x) = C_{N,s} P.V. \int \frac{u(x)-u(y)}{|x-y|^{N+2s}}\, dx\, dy,
\]
where P.V. denotes the \emph{principal value} of the singular integral, and 
\[
C_{N,s}^{-1} = \int \frac{1-\cos x_1}{|x|^{N+2s}}\, dx.
\]
It is possible to prove that $\slap$ is a pseudo-differential operator, and more precisely that
\begin{equation}\label{eq:slap}
\slap u = \mathcal{F}^{-1} \left(|\xi|^{2s} \hat{u}(\xi) \right).
\end{equation}
In particular, the \emph{symbol} of $\slap$ is $|\xi|^{2s}$. In this paper we will mainly use (\ref{eq:slap}) as the definition of the fractional laplacian. It is also useful to remark that an equivalent norm on $W^{s,2}$ is given by
\[
\sqrt{\|u\|_{L^2}^2 + \|(-\Delta)^{\frac{s}{2}}u\|_{L^2}^2}.
\]
For the reader's convenience, we review the main embedding result for fractional Sobolev spaces.
\begin{proposition}[Sobolev embedding theorem] \label{prop:Sob}
Let $s \in (0,1)$ and $p \in [1,+\infty)$ such that $sp<N$. Then there exists a constant $C$, depending only on $N$, $s$ and $p$, such that
\[
\|u\|_{L^{p^*}}^p \leq C \int \frac{|u(x)-u(y)|^p}{|x-y|^{N+sp}} dx\, dy
\]
for every $u \in W^{s,p}(\mathbb{R}^N)$, where
\[
p^* = \frac{Np}{N-sp}
\]
is the \emph{fractional critical exponent}. Hence the embedding $W^{s,p}(\mathbb{R}^N) \subset L^q(\mathbb{R}^N)$ is continuous for any $q \in [p,p^*]$, and  compact for any $q \in [p,p^*)$.
\end{proposition}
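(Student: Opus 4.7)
The plan is to first establish the critical Sobolev inequality
\[
\|u\|_{L^{p^*}}^p \leq C \iint \frac{|u(x)-u(y)|^p}{|x-y|^{N+sp}}\, dx\, dy
\]
on the dense subspace $C_c^\infty(\mathbb{R}^N)\subset W^{s,p}(\mathbb{R}^N)$ and then to derive the full family of embeddings by interpolation and a Rellich-type argument. When $p=2$ the Fourier representation (\ref{eq:slap}) reduces everything to the Hardy--Littlewood--Sobolev theorem applied to the Riesz potential of order $s$, yielding $\|u\|_{L^{2^*}} \leq C\,\||\xi|^{s}\hat u\|_{L^2}$ directly. For general $p \in [1,\infty)$ the Fourier trick is unavailable, and I would follow the real-variable approach of Di~Nezza, Palatucci and Valdinoci~\cite{DNPV}.

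The argument for general $p$ starts from the layer cake identity
\[
\|u\|_{L^{p^*}}^{p^*} = p^* \int_0^\infty t^{p^*-1}\,\bigl|\{|u|>t\}\bigr|\, dt,
\]
which, after discretization along the dyadic levels $t=2^k$, reduces the problem to controlling the sum $\sum_{k\in\mathbb{Z}} 2^{kp^*} a_k$ with $a_k = |\{|u|>2^k\}|$. For each $k$ one introduces the truncation $v_k(x) = \min\{(|u(x)|-2^{k-1})_+,\, 2^{k-1}\}$, which equals $2^{k-1}$ on $\{|u|>2^k\}$ and vanishes on $\{|u|\leq 2^{k-1}\}$, so that a Chebyshev-type inspection of the double integral defining $[v_k]_{s,p}^p$ bounds $2^{kp^*}a_k$ by a suitable power of $[v_k]_{s,p}$. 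Summing in $k$, exploiting $p^* \geq p$ and the fact that the supports of the $v_k$ nest in such a way that their seminorms telescope into $[u]_{s,p}$, delivers the claimed inequality; passing from $C_c^\infty$ to all of $W^{s,p}$ is then routine by density.

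Once the critical inequality is in hand, continuity of $W^{s,p}(\mathbb{R}^N)\hookrightarrow L^q(\mathbb{R}^N)$ for $q\in[p,p^*]$ follows by H\"older interpolation between $L^p$ and $L^{p^*}$. The compactness statement for sub-critical $q$ must be read locally, because translation invariance rules out global compactness on $\mathbb{R}^N$: on any fixed ball I would verify the Fr\'echet--Kolmogorov criterion, using that the Gagliardo seminorm controls the $L^p$ modulus of continuity $\|u(\cdot+h)-u\|_{L^p}\to 0$ uniformly on bounded subsets of $W^{s,p}$ (an elementary change of variables in the double integral), to obtain relative $L^p_{\mathrm{loc}}$ compactness; combined with the uniform $L^{p^*}$ bound, H\"older interpolation then upgrades convergence to every $q\in[p,p^*)$. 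The main obstacle is the critical inequality itself when $p\neq 2$: with no Fourier crutch available, one has to carry out the truncation scheme while keeping $C=C(N,s,p)$ uniform and making sure that the $[v_k]_{s,p}$ telescope to $[u]_{s,p}$ without double counting cross-scale interactions between different level sets.
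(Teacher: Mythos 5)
The paper does not prove this proposition at all: it is stated as background and attributed to the survey \cite{DNPV}, so there is no in-paper argument to compare your sketch against. Measured against the real-variable proof in \cite{DNPV} that you are evidently reconstructing, one of your stated worries is in fact not a problem. With $v_k=\min\{(|u|-2^{k-1})_+,\,2^{k-1}\}$ each $v_k$ is a nondecreasing $1$-Lipschitz function of $|u|$ and $\sum_k v_k=|u|$ pointwise, so $\sum_k|v_k(x)-v_k(y)|\leq\bigl||u(x)|-|u(y)|\bigr|\leq|u(x)-u(y)|$; since $p\geq 1$ this already gives $\sum_k[v_k]_{s,p}^p\leq[u]_{s,p}^p$, and there is no cross-scale double counting to fear.

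The genuine gap is in the step you compress into ``a Chebyshev-type inspection of the double integral bounds $2^{kp^*}a_k$ by a suitable power of $[v_k]_{s,p}$.'' No such direct bound comes out. Write $A_j=\{|u|>2^j\}$, $a_j=|A_j|$; then $v_k$ is supported in $A_{k-1}$ and equals $2^{k-1}$ on $A_k$, and the basic estimate $\int_{E^c}|x-y|^{-N-sp}\,dy\geq c\,|E|^{-sp/N}$ for $x\in E$, $|E|<\infty$, yields the \emph{coupled} inequality $[v_k]_{s,p}^p\geq c\,2^{kp}\,a_k\,a_{k-1}^{-sp/N}$ --- note the shift to $a_{k-1}$ in the negative power. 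Summing this term by term does not produce $\sum_k 2^{kp^*}a_k$. The missing idea is the self-improvement trick: put $T=\sum_k 2^{kp^*}a_k$ (comparable to $\|u\|_{L^{p^*}}^{p^*}$, finite for $u\in C_c^\infty$), use $a_{k-1}\leq 2^{-(k-1)p^*}T$ inside the coupled inequality, sum, and observe that the powers of $2^k$ cancel precisely because $p-p^*(1-\tfrac{sp}{N})=0$; this gives $[u]_{s,p}^p\geq c\,T^{1-sp/N}=c\,T^{p/p^*}$, hence $T\leq C\,[u]_{s,p}^{p^*}$. Without this closing step (or an equivalent, such as the combinatorial lemma in \cite{DNPV}) your outline does not deliver the critical inequality for $p\neq 2$. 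Your final remark is correct and worth retaining: compactness of $W^{s,p}(\mathbb{R}^N)\hookrightarrow L^q(\mathbb{R}^N)$ cannot hold globally (translate a fixed bump), and the statement must be read as compactness into $L^q_{\mathrm{loc}}$, which is indeed all that the paper uses later, e.g.\ in the proof of Theorem~\ref{th:1}.
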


\begin{remark}
To save notation, and since we will always work in $\mathbb{R}^N$, we will often write $W^{s,2}$, $L^p$, etc. instead of $W^{s,2}(\mathbb{R}^N)$, $L^p(\mathbb{R}^N)$, etc. 
\end{remark}

The standard \emph{Sobolev-Gagliardo-Nirenberg} inequality can also be proved in fractional spaces. We will use it in the following form.
\begin{proposition}\label{prop:GN}
Let $q>1$. Then there exists a constant $C>0$ such that
\[
\|u\|_{q+1}^{q+1} \leq C \|u\|_{W^{s,2}}^{\frac{(q-1)N}{2s}} \|u\|_2^{q+1-\frac{(q-1)N}{2s}}
\]
for every $u \in W^{s,2}$.
\end{proposition}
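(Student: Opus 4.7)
The plan is to interpolate $L^{q+1}$ between $L^2$ and the critical Sobolev space $L^{2^*}$, where $2^* = 2N/(N-2s)$, and then invoke the fractional Sobolev embedding of Proposition~\ref{prop:Sob} with $p=2$. For this strategy to be available, the admissible range is $q+1 \in (2,2^*]$, equivalently $q \in (1, (N+2s)/(N-2s)]$; this is implicitly encoded in the statement by the requirement that the exponent $q+1 - (q-1)N/(2s)$ of $\|u\|_2$ be non-negative, which is exactly what rules out the supercritical regime (in which the inequality would in fact fail by scaling).

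First I would determine $\theta \in [0,1]$ from the identity
\[
\frac{1}{q+1} = \frac{1-\theta}{2} + \frac{\theta}{2^*},
\]
which yields $\theta = \frac{N(q-1)}{2s(q+1)}$. An application of H\"older's inequality then gives
\[
\|u\|_{q+1} \leq \|u\|_2^{1-\theta}\,\|u\|_{2^*}^{\theta}.
\]
Next, Proposition~\ref{prop:Sob} (with $p=2$) furnishes $\|u\|_{2^*} \leq C\|u\|_{W^{s,2}}$, so that
\[
\|u\|_{q+1} \leq C\, \|u\|_2^{1-\theta}\, \|u\|_{W^{s,2}}^{\theta}.
\]
Raising to the $(q+1)$-th power and computing $(q+1)\theta = (q-1)N/(2s)$ together with $(q+1)(1-\theta) = q+1 - (q-1)N/(2s)$ reproduces the claimed inequality.

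The only genuinely delicate point is keeping track of the admissible range of $q$: Hölder interpolation demands $\theta \in [0,1]$, which is equivalent to $q+1 \le 2^*$, and this is precisely the restriction implicit in the statement. Apart from that, the argument is a routine assembly of Hölder's inequality with the fractional Sobolev embedding, so no substantial new obstacle arises; one could alternatively replace $\|u\|_{W^{s,2}}$ by the homogeneous seminorm $\|(-\Delta)^{s/2}u\|_2$ (which the paper has shown to be an equivalent ingredient of the full norm), obtaining a slightly sharper Gagliardo--Nirenberg statement, but this refinement is not needed here.
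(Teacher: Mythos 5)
Your argument is correct and complete; the paper states Proposition \ref{prop:GN} without proof as a known fractional analogue of the classical Gagliardo--Nirenberg inequality, and your interpolation-plus-Sobolev derivation is precisely the standard route. The computation of the interpolation exponent $\theta = \frac{N(q-1)}{2s(q+1)}$, the resulting powers $(q+1)\theta = \frac{(q-1)N}{2s}$ and $(q+1)(1-\theta) = q+1 - \frac{(q-1)N}{2s}$, and the observation that the requirement $\theta \leq 1$ is equivalent to the non-negativity of the latter exponent (i.e.\ $q+1 \leq 2^*$, the implicit restriction under which the statement is used) all check out.
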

One major tool in variational methods is the following \emph{vanishing lemma}, originally proved by P.L. Lions.
\begin{lemma}\label{lem:lions}
Assume $\{u_k\}$ is a bounded sequence in $W^{s,2}$ which satisfies
\[
\lim_{k \to +\infty} \sup_{y \in \mathbb{R}^N} \int_{B(y,R)} |u_k(x)|^2 \, dx =0,
\]
for some $R>0$. Then $u_k \to 0$ strongly in $L^q$, for every $2<q<\frac{2N}{N-2s}$.
\end{lemma}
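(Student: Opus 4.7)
The plan is to adapt to the fractional setting the classical vanishing lemma of Lions. Cover $\mathbb{R}^N$ by a countable family of balls $\{B(y_i, R)\}_i$ with bounded overlap (e.g.\ centers on a lattice of spacing $R$, so that each $x \in \mathbb{R}^N$ belongs to at most $c_N$ balls, with $c_N$ depending only on $N$). Setting $2^* = 2N/(N-2s)$, the key estimate is obtained on each ball by combining the standard interpolation $\|u\|_{L^q(B_i)} \leq \|u\|_{L^2(B_i)}^{1-\theta}\|u\|_{L^{2^*}(B_i)}^{\theta}$ (with $\theta\in(0,1)$ determined by $1/q = (1-\theta)/2 + \theta/2^*$) with the localized fractional Sobolev embedding $\|u\|_{L^{2^*}(B_i)} \leq C\|u\|_{W^{s,2}(B_i)}$, which follows from Proposition~\ref{prop:Sob} by an extension argument (or equivalently by multiplying by a smooth cut-off). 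This produces
\[
  \|u\|_{L^q(B_i)}^q \leq C \, \|u\|_{L^2(B_i)}^{q(1-\theta)} \, \|u\|_{W^{s,2}(B_i)}^{q\theta}.
\]

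A short computation yields $q\theta = 2^*(q-2)/(2^*-2)$, so $q\theta \leq 2$ precisely when $q \leq 2 + 4s/N$, which defines a nonempty sub-interval of $(2, 2^*)$. In that sub-range I set $\delta = 2 - q\theta \geq 0$ and split
\[
  \|u\|_{L^2(B_i)}^{q(1-\theta)} = \|u\|_{L^2(B_i)}^{q-2}\,\|u\|_{L^2(B_i)}^{\delta}.
\]
The first factor is controlled by $\bigl(\sup_{y}\|u\|_{L^2(B(y,R))}\bigr)^{q-2}$, and Hölder's inequality applied to the residual sum with conjugate exponents $2/\delta$ and $2/(q\theta)$ (note $\delta/2 + q\theta/2 = 1$), together with the bounded-overlap bounds $\sum_i \|u\|_{L^2(B_i)}^{2} \leq c_N \|u\|_{L^2}^2$ and $\sum_i \|u\|_{W^{s,2}(B_i)}^{2} \leq c_N \|u\|_{W^{s,2}}^2$, yields
\[
  \|u_k\|_{L^q}^q \leq C \Bigl(\sup_{y\in\mathbb{R}^N} \|u_k\|_{L^2(B(y,R))}\Bigr)^{q-2} \|u_k\|_{W^{s,2}}^2.
\]
Since $\{u_k\}$ is bounded in $W^{s,2}$ and the supremum tends to zero, I obtain $u_k\to 0$ in $L^q$ for every $q \in (2,\, 2+4s/N]$.

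To cover the remaining range $q \in (2+4s/N,\, 2^*)$, I fix any $q_1 \in (2,\, 2+4s/N]$, for which $u_k\to 0$ in $L^{q_1}$ has just been established, and recall that $\{u_k\}$ is bounded in $L^{2^*}$ by Proposition~\ref{prop:Sob}. The log-convex interpolation
\[
  \|u_k\|_{L^q} \leq \|u_k\|_{L^{q_1}}^{1-\mu}\,\|u_k\|_{L^{2^*}}^{\mu},
\]
with $\mu \in (0,1)$ determined by $1/q = (1-\mu)/q_1 + \mu/2^*$, then forces $\|u_k\|_{L^q}\to 0$.

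The main obstacle is the algebraic bookkeeping: the two Hölder steps must align so that, once the vanishing $L^2$-factor has been pulled out, both residual sums reduce to $\ell^2$-sums controlled by the ambient norms; this forces $q\theta \leq 2$, so that only a sub-range is directly accessible, the full conclusion being recovered by the interpolation bootstrap. A secondary technical nuisance is the localization of Proposition~\ref{prop:Sob} to a fixed ball, for which I rely on a standard extension theorem for $W^{s,2}$ on Lipschitz domains.
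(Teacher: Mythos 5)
Your proof is correct, and it is essentially the standard Lions vanishing argument that the paper is sketching; but you have filled in a gap that the paper's own two-line proof glosses over, and the way you order the two inequalities (local Sobolev \emph{before} summing over the cover) is in fact the order that makes the summation go through. In the paper's displayed chain, the $L^{2^*}$-norm is kept local on each ball and the Sobolev embedding is invoked only at the end; but $\sum_i \|u_k\|_{L^{2^*}(B_i)}^{\lambda q}$ is not controlled by the global $L^{2^*}$-norm when $\lambda q < 2^*$ (which is always the case for $q<2^*$), so the summation step does not close for an arbitrary $q$ in $(2,2^*)$. Your version applies the localized embedding $\|u\|_{L^{2^*}(B_i)} \le C\|u\|_{W^{s,2}(B_i)}$ first, so that the residual sum is an $\ell^2$-sum of local $W^{s,2}$-norms, and this \emph{is} controlled by the bounded-overlap estimate; the price is the restriction $q\theta\le 2$, i.e.\ $q\le 2+4s/N$, which you correctly identify as the exponent where the bookkeeping aligns (the fractional analogue of Willem's $2+4/N$). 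The interpolation bootstrap to reach the whole range $(2,2^*)$, using $L^{q_1}\to 0$ and boundedness in $L^{2^*}$, is standard and correct. The one point worth flagging is the uniformity of the localized Sobolev constant: since all balls in the cover are translates of a fixed ball of radius $R$, translation invariance of the Gagliardo seminorm makes the constant uniform in $i$, as you implicitly use. In short, your proof is the rigorous completion of the paper's sketch rather than a different route.
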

\begin{proof}
Pick $q \in (2,\frac{2N}{N-2s})$. Given $R>0$ and $y \in \mathbb{R}^N$, by standard interpolation and Proposition \ref{prop:Sob} we obtain
\[
\|u_k\|_{L^q(B(y,R))} \leq \|u_k\|_{L^2(B(y,R))}^{1-\lambda} \|u_k\|_{L^{\frac{2N}{N-2s}}(B(y,R))}^\lambda,
\]
where
\[
\frac{1-\lambda}{2} + \frac{\lambda}{\frac{2N}{N-2s}} = \frac{1}{q}.
\]
Using a locally finite covering of $\mathbb{R}^N$ consisting of balls of radius $R$, we deduce that
\[
\|u_k\|_q \leq C \|u_k\|_{L^2(B(y,R))}^{(1-\lambda)q} \|u_k\|_{\frac{2N}{N-2s}}^{\lambda q},
\]
and we conclude by the Sobolev embedding theorem \ref{prop:Sob}.
\end{proof}

In the last section we will need to use cut-offs; it is clear that this technique, which is very useful for local operators, may become troublesome for non-local operators like the fractional laplacian. Indeed, these operators cannot be easily localized, and moreover the fractional laplacian of a product does not satisfy, in general, Leibnitz's rule of differentiation.

However, in some very special cases, there are workarounds.
\begin{lemma} \label{lem:Palatucci}
Suppose that $0<s<N/2$, and let $u \in D^{s,2}(\mathbb{R}^N)$. If $\varphi \in C_0^\infty(\mathbb{R}^N)$ and $\phi_\lambda (x) = \phi(x / \lambda)$, then $\phi_\lambda u \to 0$ in $D^{s,2}(\mathbb{R}^N)$ as $\lambda \to 0$.

Moreover, if $\phi=1$ on a neighborhood of zero, then $\phi_\lambda u \to u$ in $D^{s,2}(\mathbb{R}^N)$ as $\lambda \to +\infty$.
\end{lemma}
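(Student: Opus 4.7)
The plan is to work throughout with the Gagliardo seminorm
\[
[u]_{s}^2 := \iint \frac{|u(x)-u(y)|^2}{|x-y|^{N+2s}}\,dx\,dy,
\]
which is equivalent, up to a constant depending only on $N,s$, to the square of the $D^{s,2}$-norm. Using the identity
\[
\phi_\lambda(x)u(x)-\phi_\lambda(y)u(y)=\phi_\lambda(x)(u(x)-u(y))+u(y)(\phi_\lambda(x)-\phi_\lambda(y))
\]
and $(a+b)^2\le 2a^2+2b^2$, I obtain $[\phi_\lambda u]_s^2\le 2I_1(\lambda)+2I_2(\lambda)$, where $I_1$ is the ``diagonal'' piece with numerator $\phi_\lambda(x)^2|u(x)-u(y)|^2$ and $I_2$ is the ``commutator'' piece with numerator $u(y)^2|\phi_\lambda(x)-\phi_\lambda(y)|^2$. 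The rescaling $x=\lambda z$ in the inner integral of $I_2$ reshapes it as
\[
I_2(\lambda)=\lambda^{-2s}\int u(y)^2\,\Psi(y/\lambda)\,dy,\qquad \Psi(w):=\int\frac{|\phi(z)-\phi(w)|^2}{|z-w|^{N+2s}}\,dz,
\]
with $\Psi$ continuous, bounded, and decaying like $|w|^{-(N+2s)}$ at infinity (because $\phi\in C_0^\infty$); in particular $\Psi\in L^1\cap L^{N/(2s)}(\mathbb{R}^N)$.

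I would then establish a uniform (in $\lambda$) multiplier bound. Trivially $I_1\le\|\phi\|_\infty^2\,[u]_s^2$, and H\"older's inequality with the conjugate exponents $2^{*}/2$ and $N/(2s)$ --- conjugate since $2/2^{*}+2s/N=1$, and with the $\lambda^{-2s}$ factor cancelling against the $\lambda^{+2s}$ produced by the rescaling of $\Psi(\cdot/\lambda)$ --- gives $I_2\le \|u\|_{L^{2^{*}}}^2\,\|\Psi\|_{L^{N/(2s)}}$. Combined with the Sobolev embedding of Proposition~\ref{prop:Sob}, this yields $\|\phi_\lambda u\|_{D^{s,2}}\le C(\phi,N,s)\,\|u\|_{D^{s,2}}$ uniformly in $\lambda>0$, and by the triangle inequality the same bound holds for $(1-\phi_\lambda)u$. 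Since $C_0^\infty(\mathbb{R}^N)$ is dense in $D^{s,2}(\mathbb{R}^N)$, it therefore suffices to verify both convergence statements on a test function $v\in C_0^\infty$ with $\operatorname{supp} v\subset B(0,M)$.

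For the first claim ($\lambda\to 0$): $I_1\le\|\phi\|_\infty^2\int_{\operatorname{supp}\phi_\lambda} g(x)\,dx$, with $g(x):=\int|v(x)-v(y)|^2|x-y|^{-N-2s}\,dy\in L^1(\mathbb{R}^N)$; since $|\operatorname{supp}\phi_\lambda|=\lambda^N|\operatorname{supp}\phi|\to 0$, absolute continuity of the Lebesgue integral drives $I_1\to 0$. One further substitution $w=y/\lambda$ turns $I_2$ into $\lambda^{N-2s}\int|v(\lambda w)|^2\Psi(w)\,dw\le\lambda^{N-2s}\|v\|_\infty^2\,[\phi]_s^2\to 0$, the exponent $N-2s>0$ being exactly the standing hypothesis $s<N/2$. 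For the second claim ($\lambda\to\infty$, $\phi\equiv 1$ on some ball $B(0,\rho)$): as soon as $\lambda>M/\rho$ one has $\operatorname{supp} v\subset B(0,\lambda\rho)\subset\{\phi_\lambda\equiv 1\}$, so $(1-\phi_\lambda)v\equiv 0$ identically and hence $\|(1-\phi_\lambda)v\|_{D^{s,2}}=0$ for all sufficiently large $\lambda$.

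The main obstacle is the commutator term $I_2$: neither H\"older nor the explicit scaling of $D^{s,2}$ under dilations produces convergence by itself, both merely supplying the $\lambda$-uniform multiplier bound. The resolution is to pass to the rescaled expression $\lambda^{N-2s}\int|v(\lambda w)|^2\Psi(w)\,dw$ on the dense subset $C_0^\infty$, where the boundedness and compact support of $v$, paired with the positive exponent $N-2s$ (precisely the assumption $s<N/2$), finally deliver the required smallness.
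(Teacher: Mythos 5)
Your proof is correct, and it fills a gap the paper itself leaves open: the paper does not prove Lemma~\ref{lem:Palatucci} at all but simply cites \cite[Lemma 4.1]{Palatucci}, describing that proof as ``not elementary at all'' and requiring multiplier theory between Sobolev spaces. Your argument is a genuinely different, self-contained, and elementary route. The two-step strategy -- split the Gagliardo difference into the ``diagonal'' piece $I_1$ and the ``commutator'' piece $I_2$, prove the $\lambda$-uniform multiplier bound
\[
\|\phi_\lambda u\|_{D^{s,2}}\le C(\phi,N,s)\,\|u\|_{D^{s,2}}
\]
by combining the trivial bound on $I_1$ with the H\"older/Sobolev estimate $I_2\le\|u\|_{L^{2^*}}^2\|\Psi\|_{L^{N/(2s)}}$ (where the $\lambda^{-2s}$ prefactor cancels exactly against the $\lambda^{2s}$ arising from $\|\Psi(\cdot/\lambda)\|_{L^{N/(2s)}}$), and then reduce to $v\in C_0^\infty$ by density -- is sound. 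On the dense class, $I_1\to 0$ by absolute continuity of $\int g$ over the shrinking set $\operatorname{supp}\phi_\lambda$, and $I_2=\lambda^{N-2s}\int v(\lambda w)^2\Psi(w)\,dw\le\lambda^{N-2s}\|v\|_\infty^2[\phi]_s^2\to 0$, the positivity of the exponent $N-2s$ being exactly where $s<N/2$ enters; the $\lambda\to\infty$ claim is immediate since $(1-\phi_\lambda)v\equiv 0$ once $\lambda>M/\rho$. All the auxiliary facts you invoke check out: $\Psi$ is bounded (the $|z-w|^{2-N-2s}$ local singularity is integrable because $s<1$) and decays like $|w|^{-N-2s}$, hence lies in $L^1\cap L^{N/(2s)}$; the exponents $2^*/2$ and $N/(2s)$ are H\"older conjugate; and the triangle inequality transfers the uniform bound to $(1-\phi_\lambda)$. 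Your proof therefore delivers the statement with only Sobolev embedding and scaling, avoiding the multiplier machinery the paper defers to -- a clear gain in self-containedness, at the mild cost of the decomposition and density bookkeeping.
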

We refer to \cite[Lemma 4.1]{Palatucci} for the proof, which is not elementary at all, and requires properties of multipliers between Sobolev spaces.

The next lemma provides a way to manipulate, in some cases, smooth truncations for the fractional laplacian.
\begin{lemma} \label{lem:Palatucci-bis}
Suppose that $0<s<N/2$, and let $\varphi \in W^{\sigma,2}(\mathbb{R}^N)$ for $\sigma > 1+N/2$. Then the commutator $[\varphi , (-\Delta)^{\frac{s}{2}}] \colon D^{s,2}(\mathbb{R}^N) \to L^2(\mathbb{R}^N)$ is continuous, i.e. 
\[
\varphi \cdot \left( (-\Delta)^{\frac{s}{2}}u_n) \right)- (-\Delta)^{\frac{s}{2}} \left( \varphi u_n \right) \to 0 \quad \text{in $L^2(\mathbb{R}^N)$}
\]
whenever $u_n \to 0$ in $D^{s,2}(\mathbb{R}^N)$.
\end{lemma}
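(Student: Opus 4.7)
The plan is to work entirely on the Fourier side and prove the quantitative estimate $\|[\varphi,(-\Delta)^{s/2}]u\|_{L^2}\le C\|\varphi\|_{W^{\sigma,2}}\|(-\Delta)^{s/2}u\|_{L^2}$, from which continuity as an operator $D^{s,2}\to L^2$ follows by linearity. Combining Plancherel with $\widehat{\varphi u} = (2\pi)^{-N/2}\hat\varphi * \hat u$ and $(-\Delta)^{s/2}f = \mathcal{F}^{-1}(|\xi|^s\hat f)$, a short computation yields
\[
\mathcal{F}\!\left([\varphi,(-\Delta)^{s/2}] u\right)\!(\xi) = (2\pi)^{-N/2}\!\int \hat\varphi(\xi-\eta)\bigl(|\eta|^s - |\xi|^s\bigr)\hat u(\eta)\,d\eta,
\]
and by density of $C_0^\infty$ in $D^{s,2}$ I may freely compute on Schwartz functions. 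The task thus reduces to bounding the $L^2_\xi$-norm of this integral by $\||\eta|^s\hat u\|_{L^2_\eta}$.

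The main analytic ingredient is the subadditivity bound $\big||\xi|^s - |\eta|^s\big| \le |\xi-\eta|^s$, valid for $s\in(0,1)$. I would then split $\mathbb{R}^N_\xi \times \mathbb{R}^N_\eta$ into $\Omega_1 = \{|\xi|\le 2|\eta|\}$ and $\Omega_2 = \{|\xi|>2|\eta|\}$. On $\Omega_1$ the quotient $(|\eta|^s-|\xi|^s)/|\eta|^s$ is bounded by $2^s$, so writing $w(\eta) = |\eta|^s\hat u(\eta)\in L^2$ reduces matters to a convolution against $|\hat\varphi|$ and Young's inequality produces a bound proportional to $\|\hat\varphi\|_{L^1}\|w\|_{L^2}$. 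On $\Omega_2$ one has $|\xi|\le 2|\xi-\eta|$, hence $(|\xi|/|\eta|)^s \le 2^s|\xi-\eta|^s/|\eta|^s$, and the integrand is controlled pointwise by $|\hat\varphi(\xi-\eta)||\xi-\eta|^s|\hat u(\eta)|$. I would then decompose $\hat u = \hat u\mathbf{1}_{|\eta|>1} + \hat u\mathbf{1}_{|\eta|\le 1}$ and apply Young's inequality in the two forms $\|f*g\|_{L^2}\le\|f\|_{L^1}\|g\|_{L^2}$ and $\|f*g\|_{L^2}\le\|f\|_{L^2}\|g\|_{L^1}$ respectively, with $f = |\cdot|^s\hat\varphi$ in both cases.

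All required norms of $\hat\varphi$ are finite under the hypothesis: the assumption $\sigma > 1 + N/2$ in particular implies $\sigma - s > N/2$, so Cauchy--Schwarz against the integrable weight $(1+|\xi|)^{-2(\sigma-s)}$ yields $\|\hat\varphi\|_{L^1}$ and $\|\,|\cdot|^s\hat\varphi\,\|_{L^1}$, while $\|\,|\cdot|^s\hat\varphi\,\|_{L^2} = \|(-\Delta)^{s/2}\varphi\|_{L^2}$ is trivially bounded by $\|\varphi\|_{W^{\sigma,2}}$.

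The step I expect to be the main obstacle is the low-frequency piece $\hat u\mathbf{1}_{|\eta|\le 1}$: because only $|\eta|^s\hat u$ is known to lie in $L^2$, the function $\hat u$ itself need not even be locally integrable at the origin, so one cannot apply Young's inequality to the full Fourier integral. The way out is the standing hypothesis $s<N/2$, which makes $|\eta|^{-s}$ locally square-integrable near $0$; Cauchy--Schwarz then yields $\|\hat u\|_{L^1(B_1)} \le C\|(-\Delta)^{s/2}u\|_{L^2}$, and this is precisely the input required to close the estimate for the low-frequency contribution in $\Omega_2$.
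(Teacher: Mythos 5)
Your argument is correct and takes a genuinely different route from the paper. The paper's proof is essentially a black-box reference: it approximates $(-\Delta)^{s/2}$ by the smoother pseudodifferential operators $L_\varepsilon = (\varepsilon I - \Delta)^{s/2}$, notes that $L_\varepsilon \to (-\Delta)^{s/2}$ in operator norm from $W^{s,2}$ to $L^2$, and invokes a commutator estimate of Taylor (the bound $\|[L_\varepsilon,\varphi]u\|_{L^2}\le C\|\varphi\|_{W^{\sigma,2}}\|u\|_{W^{s-1,2}}$) together with a continuity claim $D^{s,2}\hookrightarrow W^{s-1,2}$. Your approach instead computes the symbol of the commutator on the Fourier side, $\hat\varphi(\xi-\eta)(|\eta|^s-|\xi|^s)$, and bounds the resulting bilinear form by hand: the $s$-H\"{o}lder subadditivity of $t\mapsto t^s$, a dyadic-type split in $|\xi|/|\eta|$, Young's inequality, and the crucial observation that $s<N/2$ makes $|\eta|^{-s}$ locally $L^2$ so that the low-frequency part of $\hat u$ lies in $L^1$. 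The estimates you outline all close: the quotient on $\Omega_1$ is in fact bounded by $1$ (your $2^s$ is harmless), the $\Omega_2$ bound $||\eta|^s-|\xi|^s|\le |\xi|^s \le 2^s|\xi-\eta|^s$ is correct, and all three norms of $\hat\varphi$ ($L^1$, weighted $L^1$, weighted $L^2$) are controlled by $\|\varphi\|_{W^{\sigma,2}}$ via Cauchy--Schwarz because $\sigma-s>N/2$. What your argument buys is self-containedness and transparency: you avoid both the external reference to Taylor's commutator calculus and the paper's unelaborated embedding $D^{s,2}\hookrightarrow W^{s-1,2}$ (which, near $\xi=0$, is itself not immediate and appears to require the same kind of low-frequency argument you make explicitly). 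What the paper's route buys is brevity, and because Taylor's estimate is stated for fairly general symbol classes it would generalize more readily to other nonlocal operators; your argument is tailored to the specific symbol $|\xi|^s$.
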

\begin{proof}
For the sake of convenience, we provisionally write $L=(-\Delta)^{\frac{s}{2}}$. For each $\ge>0$ we set $L_\ge = \left(\ge I - \Delta \right)^{\frac{s}{2}}$, where $I$ is the identity operator. It is clear that
\begin{align*}
Lu &= \mathcal{F}^{-1} \circ M_{|\xi|^s} \circ \mathcal{F} \\
L_\ge u &= \mathcal{F}^{-1} \circ M_{\left( |\xi|^2 + \ge \right)^{\frac{s}{2}}} \circ \mathcal{F},
\end{align*}
where $M_{P(\xi)}$ is the multiplication operator with symbol $P$. The operator $L_\ge \colon W^{s,2}(\mathbb{R}^N) \to L^2(\mathbb{R}^N)$ is therefore a bounded operator, and similarly $L \colon W^{s,2}(\mathbb{R}^N) \to L^2 (\mathbb{R}^N)$. The operator norm of $L_\ge-L$ can be easily estimated:
\[
\|L_\ge-L\|_{\mathcal{L}(W^{s,2}(\mathbb{R}^N),L^2 (\mathbb{R}^N))} \leq \sup_{\xi \in \mathbb{R}^N} \frac{\left| \left(\ge + |\xi|^2 \right)^{\frac{s}{2}}- |\xi|^s \right|}{\left( 1 + |\xi|^2 \right)^{\frac{s}{2}}},
\]
so that $L_\ge \to L$ in the operator norm as $\ge \to 0$. It is therefore sufficient to prove that $[L_\ge,\varphi]$ is continuous. Since $0<s<N/2$ and $\varphi$ belongs, in particular, to $W^{\sigma ,2}(\mathbb{R}^N)$ with $\sigma > N/2+1$, we can apply \cite[Proposition 4.2]{Taylor}, which states that, for some constant $C>0$,
\[
\left\| [L_\ge,\varphi ] u \right\|_{L^2} \leq C\|\varphi\|_{W^{\sigma,2}} \|u\|_{W^{s-1,2}}.
\]
Since the embedding $D^{s,2}(\mathbb{R}^N) \to W^{s-1,2}(\mathbb{R}^N)$ is continuous, the proof is complete.
\end{proof}
\begin{remark}
Our proof follows that of~\cite[Lemma 4.2]{Palatucci}. Since we work in $\mathbb{R}^N$, however, we cannot expect the commutator to be also \emph{completely continuous} as in that reference.
\end{remark}

\section{A variational setting}

We introduce the Hilbert space
\[
E^s = \left\{ u \in L^2 \mid \int |\xi|^{2s} \left| \hat{u}(\xi)\right|^2\, d\xi + \int V(x) \left| u(x) \right|^2 \, dx < \infty
\right\}
\]
endowed with the inner product
\[
\langle u,v \rangle = \int |\xi|^{2s} \hat{u}(\xi) \hat{v}(\xi) \, d\xi + \int V(x) u(x)v(x) \, dx
\]
and its associated norm.
We shall always assume
\begin{itemize}
\item[(V$_1$)] $V \in C^1 (\mathbb{R}^N)$ and $\inf_{x \in \mathbb{R}^N} V(x) =V_0 > 0.$
\end{itemize}
Moreover, the following assumptions on the non-linearity $f=f(x,s)$ will be retained:
\begin{itemize}
\item[($f_1$)] $f \in C^1 (\mathbb{R}^N\times \mathbb{R})$;
\item[($f_2$)] $f(x,0)=0=\frac{\partial f}{\partial s}(x,0)$ for every $x \in \mathbb{R}^N$;
\item[($f_3$)] there are constants $a_1$, $a_2>0$ and $1 < p < \frac{N+2s}{N-2s}$ such that 
\[
\left| \frac{\partial f}{\partial s} (x,s) \right| \leq a_1 + a_2 \left| s \right|^{p-1},
\]
for every $x \in \mathbb{R}^N$ and $s \in \mathbb{R}$;
\item[($f_4$)] (Ambrosetti-Rabinowitz condition) there is a constant $\mu >2$ such that
\[
0< \mu F(x,s) \leq s f(x,s)
\]
for all $x \in \mathbb{R}^N$ and $s \in \mathbb{R} \setminus \{0\}$. Here
\[
F(x,s) = \int_0^s f(x,t)\, dt.
\]
\end{itemize}
Weak solutions to (\ref{eq:3}) are critical points of the functional $J \colon E^s \to \mathbb{R}$ defined by
\begin{equation}\label{eq:J}
J(u) = \frac{1}{2} \int |\xi|^{2s} \left| \hat{u}(\xi) \right|^2 \, d\xi + \frac{1}{2} \int V(x) \left|u(x)\right|^2 \, dx - \int F(x,u(x))\, dx.
\end{equation}
It is a simple exercise to check that $J$ is well-defined and of class $C^1$, as a consequence of our assumptions on $f$. Moreover, $E^s$ is continuously embedded in $W^{s,2}(\mathbb{R}^N)$, due to assumption (V$_1$).

The functional $J$ has the mountain-pass geometry, and we can introduce the following class of paths:
\[
\Gamma = \left\{ g \in C([0,1],E^s) \mid g(0)=0,\ J(g(1))<0 \right\}.
\]
The mountain-pass level
\[
c = \inf_{g \in \Gamma} \sup_{0 \leq t \leq 1} J(g(t))>0
\]
is therefore associated to $\Gamma$. Since the non-compact group of translations acts on $\mathbb{R}^N$, we cannot expect the Palais-Smale condition to be satisfied by $J$ without further assumptions. This forces us to analyze this lack of compactness.

As a first result, we obtain the existence of a non-trivial solution under a coercivity assumption on $V$. This theorem appears in \cite{Cheng} in the special case $f(x,u)=|u|^{p-1}u$.

\begin{theorem} \label{th:1}
Retain assumptions (V$_1$) and ($f_1$)--($f_4$). Assume moreover that 
\[
\lim_{|x| \to +\infty} V(x)=+\infty.
\] 
Then equation (\ref{eq:3}) has at least a non-trivial solution $u$.
\end{theorem}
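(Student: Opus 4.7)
The natural approach is to apply the Mountain Pass Theorem to the $C^1$ functional $J$ defined in \eqref{eq:J}, and to recover compactness from the coercivity of $V$.

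First, I would check the mountain pass geometry. From assumptions $(f_2)$ and $(f_3)$ one gets, for every $\ge>0$, a constant $C_\ge>0$ such that $|F(x,s)| \leq \ge s^2 + C_\ge |s|^{p+1}$; combined with $(V_1)$ and the Sobolev embedding $E^s \hookrightarrow W^{s,2} \hookrightarrow L^{p+1}$ (Proposition~\ref{prop:Sob}), this gives
\[
J(u) \geq \tfrac{1}{2}\|u\|^2 - \ge C_1 \|u\|^2 - C_\ge C_2 \|u\|^{p+1},
\]
so $J(u) \geq \alpha > 0$ on a small sphere $\|u\|=\rho$. On the other hand, the Ambrosetti--Rabinowitz condition $(f_4)$ yields $F(x,s) \geq c_1 |s|^\mu - c_2$ with $\mu > 2$, so for any fixed $u_0 \not\equiv 0$ in $E^s$ one has $J(t u_0) \to -\infty$ as $t \to +\infty$; taking $e=t_0 u_0$ with $t_0$ large provides the second ingredient. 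Hence the mountain pass level $c$ is well-defined and, by the Ambrosetti--Rabinowitz variant that does not require PS a priori, there exists a sequence $\{u_n\}\subset E^s$ with $J(u_n)\to c$ and $J'(u_n)\to 0$ in $(E^s)^*$.

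Next I would show that $\{u_n\}$ is bounded in $E^s$. Using $(f_4)$ in the standard way,
\[
c + o(1) + o(1)\|u_n\| = J(u_n) - \tfrac{1}{\mu}\langle J'(u_n),u_n\rangle \geq \left(\tfrac{1}{2}-\tfrac{1}{\mu}\right)\|u_n\|^2,
\]
which gives the desired bound since $\mu > 2$. The main obstacle, and the place where the coercivity of $V$ enters decisively, is to promote weak convergence to strong convergence in the relevant Lebesgue spaces. Concretely, I would prove that the embedding $E^s \hookrightarrow L^q(\mathbb{R}^N)$ is compact for every $q \in [2, 2N/(N-2s))$. Given a bounded sequence in $E^s$, it is bounded in $W^{s,2}$, so up to a subsequence $u_n \rightharpoonup u$ weakly in $E^s$ and $u_n \to u$ in $L^q_{\mathrm{loc}}$ for $q \in [1, 2N/(N-2s))$ by Proposition~\ref{prop:Sob}. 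For any $M>0$, coercivity gives an $R>0$ with $V(x) \geq M$ on $\{|x|>R\}$, and therefore
\[
\int_{|x|>R} |u_n-u|^2\, dx \leq \frac{1}{M}\int V(x)|u_n-u|^2\, dx \leq \frac{C}{M},
\]
so the tails in $L^2$ are uniformly small. Together with the local strong convergence, this yields $u_n \to u$ in $L^2(\mathbb{R}^N)$, and interpolation with the $L^{2^*_s}$-bound extends the convergence to all $q \in [2, 2N/(N-2s))$.

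Finally, using $(f_3)$ and the above strong convergence, a standard argument (writing $\langle J'(u_n) - J'(u), u_n - u\rangle \to 0$ and splitting off the nonlinear term, whose contribution vanishes thanks to the growth bound $|f(x,s)| \leq a_1'|s| + a_2'|s|^p$) shows that $u_n \to u$ strongly in $E^s$. Then $J(u)=c>0$ and $J'(u)=0$, hence $u$ is a non-trivial critical point of $J$, that is, a weak solution of \eqref{eq:3}. The delicate step is really the compact embedding $E^s \hookrightarrow L^q$; once this is in hand, the rest is the classical Rabinowitz-type scheme.
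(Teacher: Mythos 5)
Your proof is correct, but it takes a genuinely different route from the paper's, and the paper itself comments on the distinction. Your strategy is to establish the compact embedding $E^s \hookrightarrow L^q(\mathbb{R}^N)$ for $q \in [2, 2N/(N-2s))$ via the tail estimate
\[
\int_{|x|>R} |u_n-u|^2\, dx \leq \frac{1}{\inf_{|x|>R} V}\int V(x)|u_n-u|^2\, dx,
\]
and then run the classical Rabinowitz scheme: strong $L^q$ convergence gives $J'(u)=0$ and, via $\langle J'(u_n)-J'(u), u_n-u\rangle \to 0$, strong $E^s$ convergence, whence $J(u)=c>0$ and $u\neq 0$. This is precisely the alternative argument acknowledged in the remark immediately following Theorem~\ref{th:1}, where the compact embedding is cited from the literature rather than reproved.

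The paper's own proof deliberately avoids the compact embedding. Instead, starting from $(\ref{eq:5})$ and the Gagliardo--Nirenberg inequality (Proposition~\ref{prop:GN}), it extracts a \emph{quantitative} uniform lower bound $\|u_m\|_2 \geq \tilde c > 0$ for the PS sequence. It then assumes $u=0$ and directly contradicts this lower bound by pushing the $L^2$ mass to infinity and bounding it with $\bigl(\sup_m \|u_m\|\bigr) / \inf_{|x|\geq R}\sqrt{V(x)} \to 0$. The trade-off is explicit: your version is shorter and conceptually classical, but the coercivity of $V$ is used as a qualitative hypothesis that cannot be weakened; the paper's version is more technical but, because the constant $C_2$ in its estimate is explicit, coercivity could in principle be relaxed to a quantitative largeness condition on $V$ at infinity. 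Both are valid proofs of the stated theorem, and your steps (mountain-pass geometry from $(f_2)$--$(f_4)$, boundedness of the PS sequence from $(f_4)$, compactness from $(V_1)$ plus coercivity, strong convergence via the $(f_3)$ growth bound) are all sound.
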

\begin{proof}
The strategy of the proof is simple: we need to check that the mountain-pass level $c$ is a critical value for $J$. By Ekeland's variational principle, there exists a sequence $\{u_m\}$ in $E^s$ such that
\begin{equation*}
\lim_{m \to +\infty} J(u_m)=c, \quad \text{$\lim_{m \to +\infty} DJ(u_m)=0$ strongly.}
\end{equation*}
The usual estimate
\begin{eqnarray*}
c+1+\|u_m\| &\geq& J(u_m)-\mu^{-1} DJ(u_m)u_m \\
&=& \left(\frac{1}{2} - \frac{1}{\mu} \right) \|u_m\|^2 + \int \left[ \mu^{-1} f(x,u_m(x))-F(x,u_m(x))\right] \, dx \\
&\geq& \left(\frac{1}{2} - \frac{1}{\mu} \right) \|u_m\|^2
\end{eqnarray*}
implies the boundedness of $\{u_m\}$ in $E^s$. Up to a subsequence, we may suppose that $u_m \to u$ weakly in $E^s$ and locally strongly in $L^q$, for $1 \leq q < 2N/(N-2s)$. Clearly $u \in E^s$ is a weak solution to (\ref{eq:3}), but it may happen that $u=0$. To exclude this possibility, we remark that, at least for $m \gg 1$,
\begin{eqnarray} \label{eq:5}
\frac{c}{2} &\leq& J(u_m)-\frac{1}{2}DJ(u_m)u_m \notag \\
&=& \int \left[ \frac{1}{2}f(x,u_m(x))u_m(x)\, dx - F(x,u_m(x)) \right] \, dx.
\end{eqnarray}
Pick $\ge>0$ and a constant $A_\ge>0$, depending on $p$ and $\ge$, such that
\[
\left| \frac{\partial f}{\partial s}(x,s) \right| \leq \ge + A_\ge |s|^{p-1}
\]
for every $x \in \mathbb{R}^N$ and $s \in \mathbb{R}$. Integrating this inequality we get
\[
\left| f(x,s) \right| \leq \ge |s| + \frac{1}{p} A_\ge |s|^p.
\]
Inserting into (\ref{eq:5}) we find
\[
\frac{c}{2} \leq \int \left( \frac{\ge}{2} |u_m(x)|^2 + \frac{A_\ge}{2p} |u_m(x)|^{p+1} \right)dx.
\]
Using Proposition \ref{prop:GN} we can write, for some constant $C_1>0$:
\[
\frac{c}{2} \leq \frac{\ge}{2} \|u_m\|_2^2 + C_1 
\|u\|_{W^{s,2}}^{\frac{(p-1)N}{2s}} \|u\|_2^{p+1-\frac{(p-1)N}{2s}}
\]
Choose now 
\[
\ge \leq \frac{c}{2 \left( \sup_m \|u_m\| \right)^2},
\]
and notice that this yields
\[
\|u_m\|_2 \geq  \exp \left( \frac{C_2}{p+1-\frac{(p-1)N}{2s}} \log \frac{c}{4} \right) \equiv \tilde{c}.
\]
%\[
%\|u_m\|_2 \geq C_2 
%\left( \frac{c}{4} \right)^{\frac{1}{p+1-\frac{(p-1)N}{2s}}} \equiv \tilde{c}.
%\]
Assume, by contradiction, that the weak limit $u$ is zero. Then, for every $R>0$ there is some $m_0=m_0(R)$ with the property that, for any $m \geq m_0$,
\[
C_2 \|u_m\|_{L^2(B(0,R))} \leq \frac{\tilde{c}}{2}.
\]
But then
\begin{eqnarray} \label{eq:6}
\frac{\tilde{c}}{2} &\leq& C_2 \|u_m\|_{L^2(\mathbb{R}^N \setminus (0,R))} \notag \\
&\leq& \frac{C_2}{\inf_{|x| \geq R} \sqrt{V(x)}} \sqrt{\int_{|x|\geq R} V(x) u_m(x)^2\, dx} \notag \\
&\leq& \frac{C_2 \left( \sup_m \|u_m\| \right)}{\inf_{|x| \geq R} \sqrt{V(x)}}.
\end{eqnarray}
Since $V(x) \to +\infty$ as $|x| \to +\infty$, we reach a contradiction when $R \gg 1$. Therefore the weak limit $u$ is not trivial, and the proof is complete.
\end{proof}
\begin{remark}
The proof shows that the coercivity of $V$ may be relaxed. Indeed, the last line of (\ref{eq:6}) shows that we need a quantitative estimate on $\sqrt{V}$ at infinity. This is possible, since the constant $C_2$ can be expressed in terms of the constants appearing in our assumptions.
\end{remark}
\begin{remark}
A different proof can be supplied, by using the fact that $E^s$ is compactly embedded into $L^{p+1}$ when $V$ is coercive (see for instance \cite[Lemma 3.2]{Cheng}). The Palais-Smale sequence $\{u_m\}$ converges therefore \emph{strongly} in $L^{p+1}$, and (\ref{eq:5}) implies that
\[
\frac{c}{2} \leq \int \left[ f(x,u(x))u(x)-F(x,u(x)) \right]dx.
\] 
If $u=0$, then $c=0$, a contradiction. This proof, anyway, is not quantitative, and the coercivity assumption on $V$ cannot be easily relaxed to a suitable ``largeness'' condition, as in our previous remark.
\end{remark}
\begin{remark}
Our assumptions guarantee that weak solutions have higher regularity properties, i.e. they are H\"{o}lder continuous, they satisfy (\ref{eq:3}) pointwise, and decay to zero at infinity. We refer to Section~\ref{subs:5.1} for a summary of the regularity properties. For a different approach, we refer to \cite{FQT} and \cite{Cheng}. 

We will tacitly make use of these facts, and in particular of the continuity of our weak solutions, in the following sections.
\end{remark}
It is also possible to find \emph{positive} solutions of (\ref{eq:3}). We sketch the ideas. First of all, we replace $f$ by 
\[
f^{+}(x,s) = 
\begin{cases}
f(x,s) &\text{if $s \geq 0$} \\
0 &\text{otherwise.}
\end{cases}
\]
Theorem \ref{th:1} carries over to the equation in which $f$ is replaced by $f^{+}$, and we get a weak solution $u^{+}$ of the equation
\[
\slap u^{+} + V(x) u^{+} = f^{+}(x,u^{+}). 
\]
If $u^{+}$ becomes negative somewhere, we consider the set $\mathcal{O} = \{ x \mid u^{+}(x)<0 \}$ and we apply the maximum principle for the fractional laplacian (which holds true for semicontinous solutions, see \cite[2.2.28]{Silvestre}) on $\mathcal{O}$. Then $\mathcal{O}=\emptyset$, and in fact $u^{+}>0$ everywhere.

\section{The Nehari manifold and qualitative properties of ground-state levels}

The coercivity assumption for $V$ is rather strong, and we may wonder if we can relax it. We will show that non-trivial solutions of (\ref{eq:3}) exist under weaker assumptions, but we need a different variational approach. We introduce the \emph{Nehari manifold} associated to $J$ as follows:
\[
\mathcal{N}^s = \left\{ u \in E^s \setminus \{0\} \mid \int |\xi|^{2s} |\hat{u}(\xi)|^2 \, d\xi + \int V(x)|u(x)|^2\, dx = \int f(x,u(x))u(x)\, dx \right\}.
\]
As in the case of the standard laplacian, we have a topological structure on $\mathcal{N}^s$ under some additional assumption on $f$, i.e.
\begin{itemize}
\item[($f_5$)] The map $t \mapsto t^{-1} s f(x,ts)$ is increasing on $(0,+\infty)$, for every $x \in \mathbb{R}^N$ and $s \in \mathbb{R}$.
\end{itemize}

\begin{lemma}
Besides our standing assumptions, retain also ($f_5$). The Nehari ma\-ni\-fold $\mathcal{N}^s$ is then non-empty, and it is radially homeomorphic to the unit sphere of $E^s$.
\end{lemma}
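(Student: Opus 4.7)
The plan is to study, for each $u \in E^s \setminus \{0\}$, the fibering map $\phi_u \colon (0,+\infty) \to \mathbb{R}$ defined by $\phi_u(t) = J(tu)$, and to show it attains a unique maximum at some $t_u > 0$. The desired homeomorphism will then be the radial projection $\Psi \colon S \to \mathcal{N}^s$, $\Psi(u) = t_u u$, where $S$ is the unit sphere of $E^s$; its inverse is visibly the map $v \mapsto v/\|v\|$.

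First I would analyse the shape of $\phi_u$. Assumptions ($f_2$)--($f_3$) yield the pointwise bound $|F(x,s)| \leq \tfrac{\varepsilon}{2}|s|^2 + \tfrac{A_\varepsilon}{p+1}|s|^{p+1}$ for every $\varepsilon>0$; combined with the continuous embeddings $E^s \hookrightarrow W^{s,2} \hookrightarrow L^2 \cap L^{p+1}$ (Proposition~\ref{prop:Sob}, using $1 < p < (N+2s)/(N-2s)$), this gives
\[
\phi_u(t) \geq \tfrac{1}{2}(1-\varepsilon C_1)\, t^2 \|u\|^2 - C_2\, t^{p+1} \|u\|^{p+1},
\]
which is positive for small $t>0$ once $\varepsilon$ is small. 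On the other hand, ($f_4$) integrates in the $s$-variable to the super-quadratic lower bound $F(x,s) \geq c|s|^\mu$ with $\mu > 2$ and $c>0$ on the set where $u \neq 0$; since $\mu > 2$ the term $-\int F$ overwhelms the quadratic part and $\phi_u(t) \to -\infty$ as $t \to +\infty$. Hence $\phi_u$ attains its supremum on $(0,+\infty)$ at some $t_u > 0$ with $\phi_u'(t_u)=0$, i.e.\ $t_u u \in \mathcal{N}^s$; in particular $\mathcal{N}^s \neq \emptyset$.

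For uniqueness I would rewrite $\phi_u'(t)=0$ as
\[
\|u\|^2 = \int \frac{u(x)\, f(x, t u(x))}{t}\, dx
\]
and apply ($f_5$) pointwise with the choice $s = u(x)$: the integrand is a strictly increasing function of $t > 0$ at every $x$ where $u(x) \neq 0$, so the right-hand side is strictly increasing in $t$, and $t_u$ is uniquely determined by $u$.

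It remains to show that $u \mapsto t_u$ is continuous on $S$, which is enough to conclude that $\Psi$ is a homeomorphism. If $u_n \to u$ in $S$, the constants in the estimates above depend only on Sobolev data and on $\|u_n\|=1$, producing uniform bounds $0 < \alpha \leq t_{u_n} \leq \beta < +\infty$; along a convergent subsequence $t_{u_n} \to t^*$, dominated convergence (the growth in ($f_3$) together with the strong $L^2 \cap L^{p+1}$ convergence of $u_n$ controls the nonlinear term) lets me pass to the limit in the Nehari identity satisfied by $t_{u_n} u_n$, so $t^* u \in \mathcal{N}^s$, and the uniqueness step forces $t^* = t_u$. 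The main subtlety is uniqueness: it depends crucially on the strict monotonicity encoded in ($f_5$), without which several maximisers could coexist on the same ray through the origin; the rest is a standard application of the fibering method.
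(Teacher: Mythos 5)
Your proof follows essentially the same route as the paper: analyse the fibering map $t\mapsto J(tu)$, obtain a unique maximiser via $(f_5)$, and establish continuity of $u\mapsto t_u$ by extracting a convergent subsequence from the uniformly bounded $\{t_{u_n}\}$ and invoking uniqueness to identify the limit. The only substantive difference is presentational: you spell out the positivity of $J(tu)$ for small $t$ and the Ambrosetti--Rabinowitz super-quadraticity explicitly, while the paper takes them as known and instead details the upper bound on $t_{u_n}$ through the dichotomy $\phi(u_m)\le 1$ vs.\ $\phi(u_m)>1$, which is the concrete mechanism behind your ``uniform bounds from Sobolev data.'' One small caution: the Ambrosetti--Rabinowitz condition gives $F(x,s)\ge F(x,s_0)\,|s/s_0|^\mu$ for $|s|\ge|s_0|$, so the constant in $F(x,s)\ge c|s|^\mu$ genuinely depends on $x$ through $F(x,\cdot)$; this does not affect the conclusion $\int F(x,t\psi)\,dx\ge t^\mu\int F(x,\psi)\,dx$ for $t\ge 1$, but stating a uniform $c>0$ is not quite accurate without further hypotheses.
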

\begin{proof}
Fix any $\psi \in E^s$, and consider the path $t \mapsto t \psi$. Now,
\[
J(t\psi) = \frac{t^2}{2} \|u\|^2 - \int F(x,t\psi(x))\, dx.
\]
The assumptions on $f$ imply that the last term is super quadratic when $t \gg 1$, so that $t \mapsto J(t\psi)$ attains a \emph{unique} (because of ($f_5$)) global maximum at some $t = \phi (u) > 0$. Differentiating, we find
\[
0=\left. \frac{d}{dt}J(t\psi) \right|_{t=\phi(u)} = \phi(u) \|u\|^2 - \int f(x,\phi(u)u(x))u(x)\, dx,
\]
and therefore $\phi(u)u \in \mathcal{N}^s$. Hence $\mathcal{N}^s \neq \emptyset$. 
We can prove now that $u \mapsto \phi(u)$ is a continuous map from $E^s \setminus \{0\} \to (0,+\infty)$, which implies that $\mathcal{N}^s$ is radially homeomorphic to the unit sphere of $E^s$. 

To this aim, suppose $u_m \to u$ in $E^s \setminus \{0\}$. By definition,
\begin{equation}\label{eq:7}
\phi(u_m)^2 \|u_m\|^2 = \int f(x,\phi(u_m)u_m(x))\phi(u_m)u_m(x)\, dx,
\end{equation}
and either (i) $\phi(u_m) \leq 1$ or (ii) $\phi(u_m)>1$. If case (ii) prevails, then 
\begin{eqnarray*}
\int f(x,\phi(u_m)u_m(x))\phi(u_m)u_m(x)\, dx &\geq& \mu \int F(x,\phi(u_m)u_m(x))\, dx \\
&\geq& \mu \int \phi(u_m)^\mu F(x,u_m(x))\, dx.
\end{eqnarray*}
Putting together these facts,
\[
\phi(u_m)^{\mu-2} \leq \mu^{-1} \frac{\|u_m\|^2}{\int F(x,u_m(x))\, dx} \xrightarrow{m \to +\infty} \mu^{-1} \frac{\|u\|^2}{\int F(x,u(x))\, dx}.
\]
Hence $\{\phi(u_m)\}_m$ is bounded from above, and 
a subsequence of $\{\phi(u_m)\}$ converges to $\phi_\infty$, and (\ref{eq:7}) shows that $\phi_\infty=0$ implies $u=0$. Since $u \neq 0$, $\phi(u_m) \to \phi_\infty \neq 0$, and again (\ref{eq:7}) shows that
\[
\phi_\infty^2 \|u\|^2 = \int f(x,\phi_\infty u(x))\phi_\infty u(x)\, dx.
\]
This means that $\phi_\infty u \in \mathcal{N}^s$, and, by uniqueness, $\phi_\infty = \phi(u)$. We have proved that the sequence $\{\phi(u_m)\}$ has in any case a convergent subsequence, and the limit is independent of the subsequence itself. Therefore the \emph{whole} sequence $\{\phi(u_m)\}$ converges to $\phi(u)$, and the proof is complete.
\end{proof}

In the sequel, we will need to estimate the behavior of $J$ on $\mathcal{N}^s$. The following identities will be useful.

\begin{lemma} \label{lem:4.2}
Define
\[
c^\star = \inf_{u \in E^s \setminus \{0\}} \max_{\theta \geq 0} J(\theta u).
\]
Then
\[
c^\star = c = \inf_{u\in \mathcal{N}^s} J(u).
\]
\end{lemma}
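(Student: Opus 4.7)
I will establish the chain $c \leq c^\star \leq \inf_{u \in \mathcal{N}^s} J(u) \leq c$. The middle equality is almost immediate from the preceding lemma: for each $u \in E^s \setminus \{0\}$ the fibering map $\theta \mapsto J(\theta u)$ attains a unique positive maximum at $\theta = \phi(u)$, with $\phi(u) u \in \mathcal{N}^s$. Hence $\max_{\theta \geq 0} J(\theta u) = J(\phi(u) u) \geq \inf_{\mathcal{N}^s} J$, and taking the infimum over $u$ gives $c^\star \geq \inf_{\mathcal{N}^s} J$. Conversely, for $u \in \mathcal{N}^s$ the uniqueness of $\phi$ forces $\phi(u)=1$, so $J(u) = \max_{\theta \geq 0} J(\theta u) \geq c^\star$.

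The inequality $c \leq c^\star$ follows by exhibiting an explicit mountain-pass path. Given $u \neq 0$, the Ambrosetti--Rabinowitz condition $(f_4)$, together with the subquadraticity of $F$ near zero inherited from $(f_2)$--$(f_3)$, forces $J(Tu) \to -\infty$ as $T \to +\infty$. Choosing $T$ so large that $J(Tu) < 0$ makes the segment $g(t) = tTu$ an element of $\Gamma$, whence
\[
c \leq \sup_{t \in [0,1]} J(tTu) \leq \max_{\theta \geq 0} J(\theta u),
\]
and infimum over $u$ yields $c \leq c^\star$.

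The substantive inequality $\inf_{\mathcal{N}^s} J \leq c$ requires showing that every $g \in \Gamma$ crosses $\mathcal{N}^s$. I would study the continuous function $t \mapsto \phi(g(t))$ on the open set $\{t \in [0,1] : g(t) \neq 0\}$. Since $J(g(1)) < 0$, while $\theta \mapsto J(\theta g(1))$ is strictly positive on $(0, \phi(g(1)))$ (the derivative has no other positive zero, and the value is positive for small $\theta$), the point $\theta = 1$ must lie past the maximum, giving $\phi(g(1)) < 1$. On the other hand, if $\alpha$ denotes the largest element of $[0,1)$ with $g(\alpha) = 0$, then I would argue $\phi(g(t)) \to +\infty$ as $t \to \alpha^+$. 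Starting from the Nehari identity $\phi(u)\|u\|^2 = \int f(x, \phi(u) u)u\, dx$, the bound $|f(x,s)| \leq \varepsilon |s| + C_\varepsilon |s|^p$ coming from $(f_2)$--$(f_3)$ together with the Sobolev embedding $E^s \hookrightarrow L^{p+1}$ yields an estimate of the form $\phi(u) \gtrsim \|u\|^{-1}$ as $\|u\| \to 0$. The intermediate value theorem applied on the connected component $(\alpha, 1]$ then produces $t_0$ with $\phi(g(t_0)) = 1$, so that $g(t_0) \in \mathcal{N}^s$ and $\sup_t J(g(t)) \geq J(g(t_0)) \geq \inf_{\mathcal{N}^s} J$.

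\textbf{Main obstacle.} The delicate step is this last intersection argument, specifically the blow-up $\phi(u) \to +\infty$ as $u \to 0$ and the verification that the connected component of $\{g \neq 0\}$ containing $1$ genuinely opens onto the boundary point $\alpha$ where $g$ vanishes. Modulo this quantitative lower bound on $\phi$, the other steps are routine manipulations based on the preceding lemma and on $(f_2)$--$(f_4)$.
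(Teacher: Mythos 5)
Your proof is correct and, structurally, follows the same route as the paper: establish $c^\star = \inf_{\mathcal{N}^s} J$ from the preceding lemma, show $c \leq \inf_{\mathcal{N}^s} J$ by exhibiting the segment path $t \mapsto tTu$, and show $\inf_{\mathcal{N}^s} J \leq c$ by proving that every admissible path crosses $\mathcal{N}^s$. The one genuine difference is in the crossing step. The paper applies an intermediate-value argument to the function $t \mapsto DJ(g(t))g(t)$: by the Ambrosetti--Rabinowitz inequality one has $DJ(u)u \leq \mu J(u)$, hence $DJ(g(1))g(1) < 0$, while $DJ(g(t))g(t) > 0$ for small nonzero $g(t)$, so a zero of $DJ(g(\cdot))g(\cdot)$ occurs at a point where $g \neq 0$. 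You instead track the fibering parameter $t \mapsto \phi(g(t))$, establishing $\phi(g(1)) < 1$ and $\phi(g(t)) \to +\infty$ as $g(t) \to 0$; both versions carry the same minor technicality about the last zero of $g$ before $t=1$. Your \emph{main obstacle} is not in fact a gap: the set $\{t \in [0,1] : g(t) = 0\}$ is closed and does not contain $1$, so $\alpha = \sup\{t : g(t)=0\} < 1$ is attained and the component of $\{g \neq 0\}$ containing $1$ is exactly $(\alpha,1]$; the lower bound $\phi(u) \gtrsim \|u\|^{-1}$ follows from the growth condition exactly as you sketch, after absorbing the $\varepsilon \|u\|_2^2$ term into $\|u\|^2$. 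So the argument closes.
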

\begin{proof}
The proof is rather standard. The identity $c^\star = \inf_{\mathcal{N}}J$ is a trivial consequence of the previous Lemma. To prove that $c=\inf_{\mathcal{N}}J$, we fix an arbitrary $u \in \mathcal{N}^s$ and define a path $g_u$ as follows: $g_u(t)=t Tu$, where $J(Tu)<0$. Since $g_u \in \Gamma$, $c \leq \inf_{\mathcal{N}}J$. On the other hand, if $g \in \Gamma$, then $g(t) \in \mathcal{N}^s$ for some $t \in (0,1)$. Indeed, if $DJ(g(t))g(t)>0$, then $J(g(t)) \geq 0$ for every $t$, and this contradicts the fact that $J(g(1))<0$.
\end{proof}
In the rest of this section, we will study some qualitative properties of the level $c$ as a function of the potential $V$. For this reason, we introduce the provisional notation $c(V)$ for $c$.

\begin{proposition}
Retain assumptions (V$_1$), ($f_1$--$f_5$). Let $\tilde{V}$ be a second potential, verifying (V$_1$). If $V \geq \tilde{V}$, then $c(V) \geq c(\tilde{V})$.
\end{proposition}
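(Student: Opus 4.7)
The plan is to invoke the minimax characterization $c(V) = c^\star(V) = \inf_{u \in E^s(V) \setminus \{0\}} \max_{\theta \geq 0} J_V(\theta u)$ provided by Lemma \ref{lem:4.2}. Working through this representation is considerably more convenient than arguing directly on $\mathcal{N}^s$, because a function belonging to $\mathcal{N}^s(V)$ has no reason to belong to $\mathcal{N}^s(\tilde V)$, whereas the ``ray'' version of the minimax compares term by term along each fixed direction $u$.

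First I would record an inclusion of the underlying function spaces. Writing $E^s(V)$ and $E^s(\tilde V)$ for the two Hilbert spaces associated to $V$ and $\tilde V$ via the construction of Section 3, the hypothesis $V \geq \tilde V$ immediately gives $\int \tilde V u^2\, dx \leq \int V u^2\, dx$ for every measurable $u$, hence $E^s(V) \hookrightarrow E^s(\tilde V)$ continuously. In particular, each $u \in E^s(V) \setminus \{0\}$ is an admissible test function in the minimax defining $c(\tilde V)$.

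Next, the pointwise monotonicity of the functionals along rays: for any $u \in E^s(V)$ and $\theta \geq 0$,
\[
J_V(\theta u) - J_{\tilde V}(\theta u) = \frac{\theta^2}{2}\int \bigl(V(x) - \tilde V(x)\bigr) |u(x)|^2\, dx \geq 0,
\]
since the kinetic term and $F(x,\theta u)$ are identical in the two functionals. Consequently $\max_{\theta \geq 0} J_V(\theta u) \geq \max_{\theta \geq 0} J_{\tilde V}(\theta u)$ for every $u \in E^s(V) \setminus \{0\}$.

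Combining the two observations, for every such $u$,
\[
\max_{\theta \geq 0} J_V(\theta u) \;\geq\; \max_{\theta \geq 0} J_{\tilde V}(\theta u) \;\geq\; \inf_{v \in E^s(\tilde V) \setminus \{0\}} \max_{\theta \geq 0} J_{\tilde V}(\theta v) \;=\; c^\star(\tilde V) \;=\; c(\tilde V),
\]
and passing to the infimum over $u \in E^s(V) \setminus \{0\}$ yields $c(V) = c^\star(V) \geq c(\tilde V)$. There is no real obstacle: the entire statement reduces to the fact that the minimax representation of $c$ is monotone in $V$, and Lemma \ref{lem:4.2} absorbs the only slightly delicate point.
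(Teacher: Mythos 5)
Your proof is correct and amounts to essentially the same argument as the paper's: both exploit the pointwise monotonicity $J_V \geq J_{\tilde V}$ in $V$ and then pass through a minimax characterization of $c$. The only stylistic difference is that you work with the ray characterization $c = c^\star = \inf_u \max_{\theta \geq 0} J(\theta u)$ supplied by Lemma~\ref{lem:4.2} (hence you implicitly lean on $(f_5)$), whereas the paper uses the path characterization $c = \inf_{g \in \Gamma}\sup_t J(g(t))$ directly, observing that any $g \in \Gamma$ also lies in $\tilde\Gamma$ because $\tilde J(g(1)) \leq J(g(1)) < 0$. These two routes are equivalent in the present setting. One small merit of your write-up is that you record explicitly the continuous embedding $E^s(V) \hookrightarrow E^s(\tilde V)$, which guarantees that competitors for $c(V)$ are admissible in the problem for $c(\tilde V)$; the paper's proof passes over this point silently.
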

\begin{proof}
To prove this monotonicity property of $c$,  we first introduce the functional 
\[
\tilde{J}(u) = \frac{1}{2}\int |\xi|^{2s}|\hat{u}(\xi)|^2 \, d\xi + \frac{1}{2}\int \tilde{V}(x)|u(x)|^2 \, dx - \int F(x,u(x))\, dx
\]
associated to the potential $\tilde{V}$. Clearly $J(u) \geq \tilde{J}(u)$ at any $u \in E^s$. Let $\tilde{\Gamma}$ be the analogue of $\Gamma$ for $\tilde{J}$. If $g \in \Gamma$, then $\tilde{g} \in \tilde{\Gamma}$, and
\[
\max_{0 \leq t \leq 1} J(g(t)) \geq \max_{0 \leq t \leq 1} \tilde{J}(g(t)).
\]
Minimizing with respect to $g$ gives
\[
c \geq \inf_{\gamma\in \tilde{\Gamma}} \max_{0 \leq t \leq 1} \tilde{J}(g(t)) = \tilde{c}. \qedhere
\]
\end{proof}
This monotonicity is the key to prove the continuity of $c(V)$ with respect to $V$.
\begin{proposition} \label{prop:4.4}
Retain assumptions ($f_1$--$f_5$). Suppose that $V$ and all the potentials of a sequence $\{V_m\}$ satisfy (V$_1$). If $V_m \to V$ uniformly, then $c(V_m) \to c(V)$.
\end{proposition}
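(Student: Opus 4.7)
The plan is to combine the monotonicity of $c(\cdot)$ just proved with the min–max characterization of Lemma~\ref{lem:4.2}. Setting $\ge_m = \|V_m - V\|_\infty$, the uniform convergence gives $V - \ge_m \leq V_m \leq V + \ge_m$ pointwise, and for $m$ large $V \pm \ge_m$ still satisfies (V$_1$). The previous proposition then yields the sandwich
\[
c(V - \ge_m) \leq c(V_m) \leq c(V + \ge_m),
\]
together with the analogous inequalities with $V_m$ replaced by $V$. So it suffices to prove that $c(V + \ge) \to c(V)$ and $c(V - \ge) \to c(V)$ as $\ge \to 0^+$.

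For the upper direction, I would fix $\eta > 0$ and choose $u \in \mathcal{N}^s_V$ with $J_V(u) \leq c(V) + \eta$. By Lemma~\ref{lem:4.2}, $\max_{\theta \geq 0} J_V(\theta u) = J_V(u)$, and one has the identity
\[
J_{V + \ge}(\theta u) = J_V(\theta u) + \tfrac{\ge \theta^2}{2}\|u\|_2^2.
\]
The maximizer $\theta_\ge$ of $\theta \mapsto J_{V+\ge}(\theta u)$ satisfies $\theta_\ge \|u\|^2_{V+\ge} = \int f(x,\theta_\ge u)u\,dx$; combining with ($f_4$), the right-hand side grows like $\theta_\ge^{\mu-1}$ while the left-hand side grows linearly in $\theta_\ge$, forcing $\sup_{\ge \in (0,1)} \theta_\ge < \infty$. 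Hence
\[
c(V + \ge) \leq J_{V+\ge}(\theta_\ge u) \leq J_V(\theta_\ge u) + \tfrac{\ge \theta_\ge^2}{2}\|u\|_2^2 \leq J_V(u) + o(1),
\]
and letting first $\ge \to 0$ and then $\eta \to 0$ gives $\limsup_{\ge \to 0^+} c(V+\ge) \leq c(V)$, while monotonicity gives the reverse inequality.

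For the lower direction, I would pick $u_\ge \in \mathcal{N}^s_{V-\ge}$ with $J_{V-\ge}(u_\ge) \leq c(V-\ge) + \eta$; since monotonicity already gives $c(V-\ge) \leq c(V)$, the Ambrosetti–Rabinowitz estimate used in the proof of Theorem~\ref{th:1} yields that $\|u_\ge\|_{V-\ge}$ is bounded uniformly in $\ge$, and hence so is $\|u_\ge\|_2$ (using $V-\ge \geq V_0/2$ for small $\ge$). Let $t_\ge > 0$ be the scalar such that $t_\ge u_\ge \in \mathcal{N}^s_V$. Then
\[
c(V) \leq J_V(t_\ge u_\ge) = J_{V-\ge}(t_\ge u_\ge) + \tfrac{\ge t_\ge^2}{2}\|u_\ge\|_2^2 \leq J_{V-\ge}(u_\ge) + \tfrac{\ge t_\ge^2}{2}\|u_\ge\|_2^2.
\]
Taking $\ge \to 0$ with $\eta$ arbitrary gives $c(V) \leq \liminf c(V-\ge)$, provided $t_\ge$ is bounded.

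The main obstacle is precisely this bound on $t_\ge$. To establish it I would argue as in the upper direction: $t_\ge$ is characterized by $t_\ge \|u_\ge\|^2_V = \int f(x, t_\ge u_\ge)u_\ge\, dx$, and condition ($f_4$) yields a lower bound of the form $f(x,s)s \geq C|s|^\mu$ for $|s|$ large, so the right-hand side dominates any linear growth in $t_\ge$ unless $u_\ge$ collapses in $L^\mu$. The usual Nehari lower bound (obtained from $\|u_\ge\|^2_{V-\ge} = \int f(x,u_\ge)u_\ge\,dx$, the growth control ($f_3$), and Proposition~\ref{prop:Sob}) gives that $\|u_\ge\|_{V-\ge}$, and hence $\|u_\ge\|_\mu$, stays bounded \emph{away from zero} uniformly in $\ge$, ruling out $t_\ge \to +\infty$ and closing the argument.
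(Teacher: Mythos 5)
Your argument is genuinely different from, and in fact considerably lighter than, the paper's own proof. The paper does not work directly with Nehari near-minimizers: it normalizes sequences $u_k$ on the unit sphere, uses the quantitative deformation lemma to extract approximate critical points $w_k$ near the associated paths, applies Lions' vanishing lemma to the $w_k$ to obtain a \emph{local} $L^2$ lower bound, and only then deduces that $\int F(x,u_k)\,dx$ cannot tend to zero, through the estimate $|s|^2 \leq \gamma + A_\gamma F(x,s)$ applied on a ball. Your plan avoids the deformation and the vanishing lemma entirely, exploiting the Nehari projection and elementary scaling. This is a real simplification of the lower-direction inequality.

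There is, however, a step you compress too aggressively, and it is exactly the place where the paper invests most of its effort. You write that the Nehari lower bound gives ``$\|u_\ge\|_{V-\ge}$, and hence $\|u_\ge\|_\mu$, stays bounded away from zero.'' The implication is not immediate: a lower bound on $\|u_\ge\|$ in $E^s$ does \emph{not} imply a lower bound on any Lebesgue norm, since the embedding is continuous only in the wrong direction. What the Nehari identity actually delivers, after absorbing the $|s|^2$-term from $(f_3)$ via $\eta = V_0/4$, is the intermediate inequality $\tfrac12\|u_\ge\|^2 \leq C\|u_\ge\|_{p+1}^{p+1}$; combined with the uniform lower bound $\|u_\ge\| \geq \delta_0$, this gives $\|u_\ge\|_{p+1} \geq \delta_1 > 0$. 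One then needs an interpolation step (using $\mu \leq p+1 < 2N/(N-2s)$ and the uniform upper bound on $\|u_\ge\|$, hence on $\|u_\ge\|_{2^*}$ via Sobolev) to transfer this to a lower bound on $\|u_\ge\|_\mu$, which is the quantity that controls the right-hand side of the Nehari equation for $t_\ge$. Spelling this chain out is necessary, since otherwise ``collapse in $L^\mu$'' is not ruled out. Once that is done, the argument that $t_\ge$ is bounded (splitting the integral over $\{|u_\ge| > 1/t_\ge\}$ and its complement, and bounding the contribution of the small set by $\|u_\ge\|_2^2/t_\ge^{\mu-2}$) closes correctly. Note also that both your use of $f(x,s)s \geq C|s|^\mu$ for $|s|$ large and the paper's use of $|s|^2 \leq \gamma + A_\gamma F(x,s)$ require a uniformity in $x$ that is a mild strengthening of $(f_4)$ as literally stated; this is shared, so it does not count against you, but it is worth being aware of.
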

\begin{proof}
Pick $\ge>0$. For $m \gg 1$,
\[
V+\ge \geq V + |V_m-V| \geq V \geq V-|V_m-V| \geq V-\ge.
\]
By the monotonicity of $c(V)$, it is enough to prove the weaker result
\[
\lim_{\ge \to 0} c(V+\ge) = c(V).
\]
Put, to make notation lighter, $c_\ge = c(V+\ge)$. Again by monotonicity,
\[
\lim_{\ge \to 0-} c(\ge) = \underline{c} \leq c(V) = c_0.
\]
Suppose that $\underline{c}<c_0$, and consider the functional
\[
J_\ge (u) = \frac{1}{2} \int |\xi|^{2s}|\hat{u}(\xi)|^2 \, d\xi + \frac{1}{2} \int (V(x)+\ge)|u(x)|^2 \, dx - \int F(x,u(x))\, dx.
\]
Pick any sequence $\{\ge_k\}$ such that $\ge_k \to 0^{-}$ as $k \to +\infty$, and let $\delta_m \to 0^{+}$ as $m \to +\infty$. By Lemma \ref{lem:4.2}, for each $k \in \mathbb{N}$ there exists a sequence $\{u_{km}\}_m$ in $E^s$ such that $\|u_{km}\|=1$ and 
\[
\max_{\theta \geq 0} J_{\ge_k}(\theta u_{km}) \leq c_{\ge_k}+\delta_m.
\]
To each $u_{km}$ we associate a path $g_{km}$ such that
\[
\max_{0 \leq t \leq 1} J_{\ge_k}(g_{km}(t)) = \max_{\theta \geq 0} J(\theta u_{km}),
\]
as we did in Lemma \ref{lem:4.2}. A standard result in Critical Point Theorem (see for example \cite[Theorem 4.3]{MaWi}) states that there are sequences $\{w_{km}\}$ in $E^s$ and $\{t_{km}\}$ in $[0,1]$ such that 
\begin{eqnarray*}
&&\|w_{km}-g_{km}(t_{km})\| \leq \sqrt{\delta_m}, \\
&&J_{\ge_k}(w_{km}) \in \left( c_{\ge_k}-\delta_m,c_{\ge_k} \right), \\
&&\|DJ_{\ge_k}(w_{km}) \| \leq \sqrt{\delta_m}.
\end{eqnarray*}
Specializing to $m=k$, and setting $u_k=u_{kk}$, $w_k=w_{kk}$, we deduce that
\begin{eqnarray*}
c_0 &\leq& \max_{\theta \geq 0} J(\theta u_k) = J(\phi(u_k)u_k) \\
&=& J_{\ge_k} (\phi(u_k)u_k) - \ge_k \phi(u_k)^2 \|u_k\|_2^2 \\
&\leq& \max_{\theta \geq 0} J_{\ge_k}(\theta u_k) - \ge_k \phi(u_k)^2 \|u_k\|_2^2 \\
&\leq& c_{\ge_k} + \delta_k -\ge_k \phi(u_k)^2 \|u_k\|_2^2 \\
&\leq& \underline{c}+\delta_k -\ge_k \phi(u_k)^2 \|u_k\|_2^2.
\end{eqnarray*}
Since $\|u_k\|=1$, there is a constant $M_1>0$ such that $\sup_k \|u_k\|_2 \leq M_1$. 
Hence the sequence $\{\phi(u_k)\}$ cannot be bounded, otherwise the last inequalities contradict the assumption~$\underline{c}<c$. Recalling the definition of $\phi(u_k)$, we must conclude that $\phi(u_k)>1$ for large $k$, so that
\[
\phi(u_k)^2 \geq \mu \int F(x,\phi(u_k)u_k(x))\, dx \geq \mu \phi(u_k)^\mu \int F(x,u_k(x))\, dx,
\]
or
\[
\phi(u_k)^{\mu-2} \leq \frac{1}{\mu \int F(x,u_k(x))\, dx}.
\]
Since there is no upper bound for $\phi(u_k)$, the denominator must approach zero as $k \to +\infty$. But this is impossible. Indeed, the map $g_k(t)=g_{kk}(t)$ has the form (by construction, see Lemma \ref{lem:4.2}) $\psi_k(t)u_k$. The properties of $w_k$ imply now that
\[
\|w_k-\psi_k(t)u_k\| \leq \sqrt{\delta_k}.
\]
Since $\{w_k\}$ is bounded, there is a constant $M_2>0$ such that 
\[
\psi_k(t) \leq \sqrt{\delta_k} + \|w_k\| \leq M_2.
\]
For any ball $B(y,r)$, we have
\begin{align*}
\|u_k\|_{L^2(B(y,r))} &\geq M_2^{-1} \|\psi_k(t)u_k \|_{L^2(B(y,r))} \\
&\geq M_2^{-1} \left( \|w_k\|_{L^2(B(y,r))} - \|w_k - \psi_k(t)u_k \|_{L^2(B(y,r))} \right) \\
&\geq M_2^{-1} \left( \|w_k\|_{L^2(B(y,r))} - M_3 \sqrt{\delta_k} \right).
\end{align*}
By the generalized Lions' Lemma \ref{lem:lions}, there are a sequence of points $\{y_k\}$ and numbers $\beta$, $R>0$ such that
\[
\liminf_{k \to +\infty} \int_{B(y_k,R)} |w_k|^2 \geq \beta.
\]
Hence, for $k \gg 1$,
\begin{equation} \label{eq:8}
\|u_k\|_{L^2(B(y_k,R)} \geq M_2^{-1} \sqrt{\frac{\beta}{2}}.
\end{equation}
Recall that we want to prove that $\int F(x,u_k(x))\, dx \to 0$ is impossible. From ($f_4$), given $\gamma >0$, there exists $A_\gamma>0$ such that $|s|^2 \leq \gamma + A_\gamma F(x,s)$ for all $x \in \mathbb{R}^N$ and $s \in \mathbb{R}$. Consequently,
\[
\int_{B(y_k,R)} |u_k|^2 \leq \gamma + A_\gamma \int_{B(y_k,R)}F(x,u_k(x))\, dx.
\]
If $\int F(x,u_k(x))\, dx \to 0$, then $\int_{B(y_k,R)} |u_k|^2 \to 0$, contrary to (\ref{eq:8}).

We have finally proved that
\[
\lim_{\ge \to 0-} c_\ge = c_0.
\]
To complete the proof, assume by contradiction that
\[
c_0 < \overline{c}=\lim_{\ge \to 0+} c_\ge.
\]
Let $\delta_k$ be as before; again, there is a sequence $\{u_k\}$ in $E^s$ such that $\|u_k\|=1$ and 
\[
\max_{\theta \geq 0} J(\theta u_k) = c_0+\delta_k.
\]
Choose $w_k=w_{kk}$ as above, and fix $\ge>0$. Let $\phi_\ge$ be the radial homeomorphism induced by $J_\ge$, as $J$ induced $\phi$. Hence
\begin{align*}
\overline{c} &< c_\ge  \leq \max_{\theta \geq 0} J_\ge (\theta u_k) = J_\ge (\phi_\ge(u_k)u_k) \\
&= J(\phi_\ge (u_k)u_k) + \ge \phi_\ge (u_k)^2 \|u_k\|_2^2 \\
&\leq c_0 + \delta_k + \ge \phi_\ge (u_k)^2 \|u_k\|_2^2.
\end{align*}
As above, either $\phi_\ge(u_k)\leq 1$ or
\[
\phi_\ge(u_k)^{\mu-2} \leq \frac{\int |\xi|^{2s} |\hat{u}(\xi)|^2\, d\xi + \int (V(x)+\ge)|u_k(x)|^2 \, dx}{\mu \int F(x,u_k(x))\, dx}.
\]
In any case, we can conclude as earlier that $\{\phi_\ge(u_k)\}$ is a bounded sequence, and $c_0 < \overline{c}=\lim_{\ge \to 0+} c_\ge$ cannot hold. This completes the proof.
\end{proof}

\section{Existence results}

In this section we will prove some existence results for equation (\ref{eq:3}). We introduce the main assumption on the potential $V$:
\begin{itemize}
\item[(V$_2$)] for some constant $V_\infty>0$, there results
\[
\liminf_{|x| \to +\infty} V(x) \geq V_\infty.
\]
\end{itemize}
\begin{remark}
The case $V_\infty = V_0$ is not excluded.
\end{remark}
Since this assumption deals with the behavior of $V$ at infinity, it is natural to compare our equation (\ref{eq:3}) to a ``problem at infinity''; we introduce the functional $J^\infty \colon E^s \to \mathbb{R}$ by
\[
J^\infty(u) = \frac{1}{2} \int |\xi|^{2s} |\hat{u}(\xi)|^2 \, d\xi + \frac{V_\infty}{2} \int  |u(x)|^2 \, dx - \int F(x,u(x))\, dx.
\]
This functional is of class $C^1$ and has the mountain-pass geometry (see \cite{FQT}); hence we can set
\[
\Gamma^\infty = \left\{ g \in C([0,1],E^s) \mid g(0)=0,\ J^\infty (g(1))<0 \right\}
\]
and 
\[
c_\infty = \inf_{g \in \Gamma^\infty} \max_{0 \leq t \leq 1} J^\infty (g(t)).
\]
Here is a first, general, existence result.
\begin{theorem} \label{th:5.1}
Assume (V$_1$), (V$_2$) and ($f_1$--$f_5$). Then either $c$ is a critical value of $J$, or $c_\infty \leq c$.
\end{theorem}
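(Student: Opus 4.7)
The plan is a standard concentration-compactness analysis of a Palais-Smale sequence at the mountain-pass level $c$. By Ekeland's variational principle applied to the mountain-pass geometry, I would obtain a sequence $\{u_m\} \subset E^s$ with $J(u_m) \to c$ and $DJ(u_m) \to 0$ in $(E^s)'$. The Ambrosetti--Rabinowitz estimate performed in the proof of Theorem~\ref{th:1} gives boundedness of $\{u_m\}$ in $E^s$; passing to a subsequence, $u_m \rightharpoonup u$ weakly in $E^s$, strongly in $L^q_{\mathrm{loc}}$ for every $q \in [1, 2N/(N-2s))$, and pointwise almost everywhere.

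If $u \neq 0$, I would argue that $c$ is a critical value of $J$. Passing to the weak limit in $\langle DJ(u_m), \varphi \rangle = o(1)$ for each $\varphi \in C_0^\infty(\mathbb{R}^N)$ shows that $u$ is a weak solution of~(\ref{eq:3}) and hence $u \in \mathcal{N}^s$. Applying Fatou's lemma to the nonnegative integrand $\tfrac12 f(x, u_m)u_m - F(x, u_m)$ (nonnegativity by ($f_4$)) together with the identity $c = \lim_m (J(u_m) - \tfrac12 DJ(u_m)u_m)$ gives $J(u) = J(u) - \tfrac12 DJ(u)u \leq c$, while Lemma~\ref{lem:4.2} yields $J(u) \geq c$; thus $J(u) = c$.

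If instead $u = 0$, I would first rule out vanishing via Lemma~\ref{lem:lions}: if $\sup_y \int_{B(y,R)} |u_m|^2 \to 0$, then $u_m \to 0$ strongly in $L^{p+1}$ (noting $2 < p+1 < 2N/(N-2s)$ by ($f_3$)); using the consequence $|f(x,s)| \leq \varepsilon |s| + A_\varepsilon |s|^p$ of ($f_2$)--($f_3$), both $\int F(x, u_m)\,dx$ and $\int f(x,u_m)u_m\,dx$ would vanish in the limit, forcing $\|u_m\| \to 0$ via $DJ(u_m)u_m = o(1)$, contradicting $c > 0$. Hence non-vanishing holds, providing $y_m \in \mathbb{R}^N$ and $\beta, R > 0$ with $\liminf_m \int_{B(y_m,R)} |u_m|^2 \geq \beta$; local strong $L^2$ convergence $u_m \to 0$ forces $|y_m| \to \infty$. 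Setting $v_m(\cdot) = u_m(\cdot + y_m)$, translation-invariance of the $W^{s,2}$-norm keeps $\{v_m\}$ bounded in $W^{s,2}$, and a further subsequence yields $v_m \rightharpoonup v \not\equiv 0$ weakly in $W^{s,2}$.

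The final, and hardest, step is to identify $v$ as a critical point of $J^\infty$ with $J^\infty(v) \leq c$. Testing the condition $DJ(u_m) \to 0$ against $\varphi(\cdot - y_m)$ with $\varphi \in C_0^\infty(\mathbb{R}^N)$, the kinetic term passes to the limit by weak convergence of $v_m$ in $W^{s,2}$; the potential term rewrites as $\int V(y+y_m) v_m(y) \varphi(y)\,dy$ and, along a subsequence where the $\liminf$ in (V$_2$) is attained on $\operatorname{supp}\varphi$, converges to $V_\infty \int v \varphi\,dy$ using local strong $L^2$ convergence; the nonlinear term is handled by the compact Sobolev embedding together with ($f_3$). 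This yields the Euler equation for $J^\infty$, so $v$ lies on the Nehari manifold of $J^\infty$. A parallel Fatou argument on $\tfrac12 f(x, u_m)u_m - F(x, u_m)$, transported by the change of variable $x \mapsto x+y_m$, gives $J^\infty(v) \leq c$ and hence $c_\infty \leq J^\infty(v) \leq c$. The principal obstacle here is that (V$_2$) provides only a one-sided asymptotic bound on $V$ and that $F$ retains explicit $x$-dependence under the shift, so pinning down the shifted limit to exactly the functional $J^\infty$ (rather than some larger functional built from $\liminf_m V(\cdot+y_m)$) requires a careful subsequence argument exploiting ($f_4$).
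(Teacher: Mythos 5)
Your handling of the case $u \neq 0$ is sound (Fatou on $\tfrac12 f(x,u_m)u_m - F(x,u_m) \geq 0$ plus Lemma~\ref{lem:4.2} pins down $J(u) = c$), and excluding vanishing when $u = 0$ also works. But the translation step for the non-vanishing, $u=0$ branch is where the argument breaks down, and you have correctly sensed the danger without resolving it. The problem is twofold and, in fact, fatal to the translation strategy under the stated hypotheses. First, (V$_2$) gives only $\liminf_{|x|\to\infty} V \geq V_\infty$, so $V(\cdot + y_m)$ need not converge to $V_\infty$ (it could oscillate or converge to a strictly larger function); "choosing a subsequence where the $\liminf$ is attained on $\operatorname{supp}\varphi$'' is not a coherent diagonal argument over all test functions. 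Second, and more structurally, $J^\infty$ as defined in the paper keeps the $x$-dependent nonlinearity $F(x,\cdot)$, so after translating $u_m$ to $v_m = u_m(\cdot + y_m)$ the nonlinear integrand becomes $F(x+y_m, v_m(x))$, which bears no relation to $F(x, v(x))$ for general $x$-dependent $f$ — no assumption in ($f_1$)--($f_5$) forces $f(x+y_m,\cdot)$ to converge. So even if $v$ exists and is nontrivial, there is no equation you can identify it with, let alone the one whose energy is $J^\infty$.

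The paper sidesteps translation entirely. It uses the minimax characterization from Lemma~\ref{lem:4.2} together with the exact algebraic identity
\[
J(\alpha u_m) = J^\infty(\alpha u_m) + \frac{1}{2}\int \bigl(V(x) - V_\infty\bigr)|\alpha u_m(x)|^2\,dx,
\]
and splits the correction integral over $B(0,\rho)$ and its complement. The argument is first run under the \emph{strict} inequality $\liminf_{|x|\to\infty} V > V_\infty$, which makes the far-field contribution nonnegative and therefore discardable; the local contribution is then killed by strong local $L^2$ convergence in the case where $u_m \to 0$ near the origin, giving $c \geq c_\infty$. Finally, the strict inequality is relaxed to (V$_2$) by applying this partial result to $V - \varepsilon$ and letting $\varepsilon \to 0$ via the continuity of $c(V)$ established in Proposition~\ref{prop:4.4}. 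This comparison-at-fixed-frame argument is exactly what your translation-based approach is missing, and it is not a cosmetic difference: without it, the $u = 0$ branch of your proof does not close.
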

\begin{proof}
We first prove the theorem under the stronger assumption
\begin{equation} \label{eq:9}
\liminf_{|x| \to +\infty} V(x) > V_\infty.
\end{equation}
Since the proof makes use of several techniques already presented in the previous section, we will be sketchy. As earlier, the different characterization of the level $c$ provides a sequence $\{u_m\}$ in $E^s$ such that $\|u_m\|=1$ and
\[
\max_{\theta \geq 0} J(\theta u_m) = c + o(1).
\]
Attach a path $g_m \in \Gamma$ to each $u_m$ in such a way that
\[
\max_{0 \leq t \leq 1} g_m(t) = \max_{\theta \geq 0} J(\theta u_m).
\]
Once again, we can find sequences $\{w_m\}$ in $E^s$, $\ge_m \to 0$ and $t_m \in [0,1]$ such that
\begin{align*}
&\|w_m - g_m(t_m)\| \leq \sqrt{\ge_m} \\
&J(w_m) \in (c-\ge_m,c) \\
&\|DJ(w_m)\| \leq \sqrt{\ge_m}.
\end{align*}
It follows easily that $\{w_m\}$ is bounded, and we assume that, up to subsequences, it converges weakly in $E^s$ to some $w$ and strongly in $L^q$, for any $q \in [1,\frac{2N}{N-2s})$. Then $w$ weakly solves the limiting equation
\begin{equation} \label{eq:10}
\slap w + V_\infty w = f(x,w).
\end{equation}
Lemma \ref{lem:lions} implies the existence of a sequence of points $y_m \in \mathbb{R}^N$ and of constants $\beta>0$ and $R>0$ such that
\[
\liminf_{m \to +\infty} \int_{B(y_m,R)} |w_m(x)|^2 \, dx > \beta.
\]
If the sequence $\{y_m\}$ is bounded, then $w \neq 0$ and the local compactness of the Sobolev embedding tells us that, for every $\rho>0$,
\begin{eqnarray*}
J(w_m) - \frac{1}{2}DJ(w_m)w_m &=& \frac{1}{2}\int \left( f(x,w_m(x))w_m(x) - F(x,w_m(x)) \right)dx \\
&\geq& \frac{1}{2}\int_{B(0,\rho)} \left( f(x,w_m(x))w_m(x) - F(x,w_m(x)) \right)dx \\
&=& \frac{1}{2}\int_{B(0,\rho)} \left( f(x,w(x))w(x) - F(x,w(x)) \right)dx + o(1).
\end{eqnarray*}
Letting $m \to +\infty$,
\[
c \geq \frac{1}{2}\int_{B(0,\rho)} \left( f(x,w(x))w(x) - F(x,w(x)) \right)dx.
\]
But the right-hand side of this relation coincides with $J^\infty(w)$, since $w$ solves (\ref{eq:10}), and therefore
\[
c \geq c_\infty.
\]
If, on the other hand, $\{y_m\}$ is unbounded, and we may even assume that $y_m \to +\infty$, then, for every $\alpha>0$ and $\rho>0$,
\begin{eqnarray*}
\max_{\theta \geq 0} J(\theta u_m) &\geq& J(\alpha u_m) 
=J^\infty (\alpha u_m) + \frac{1}{2}\int_{B(0,\rho)} \left( V(x)-V_\infty \right) |\alpha u_m(x)|^2\, dx \\
&&{}+ \frac{1}{2} \int_{\mathbb{R}^N \setminus B(0,\rho)} \left( V(x)-V_\infty \right) |\alpha u_m(x)|^2\, dx.
\end{eqnarray*}
Thanks to assumption~(\ref{eq:9}), we may choose $\rho>0$ so that $V(x) \geq V_\infty$ whenever $|x| \geq \rho$. Thus
\begin{equation*}
\max_{\theta \geq 0} J(\theta u_m) \geq J^\infty (\alpha u_m) +\frac{1}{2}\int_{B(0,\rho)} \left( V(x)-V_\infty \right) |\alpha u_m(x)|^2\, dx.
\end{equation*}
Specialize now $\alpha = \phi^\infty(u_m)$, where $\phi^\infty (u_m)$ is the unique positive number such that $\phi^\infty(u_m)u_m$ belongs to the Nehari manifold of $J^\infty$. As such,
\[
J^\infty (\phi^\infty(u_m)) = \max_{\theta \geq 0} J^\infty(\alpha u_m)
\]
and
\[
\max_{\theta \geq 0} J^\infty (\theta u_m) \geq c_\infty + \frac{1}{2} \int_{B(0,\rho)} \left( V(x)-V_\infty \right) |\phi^\infty (u_m)u_m(x)|^2\, dx.
\]
As earlier, $\{\phi^\infty(u_m)\}$ is a bounded sequence; if the $L^2$-norm of $u_m$ is bounded away from zero on $B(0,\rho)$, i.e. if 
\begin{equation} \label{eq:11}
\int_{B(0,\rho)}|u_m(x)|^2 \, dx \geq \gamma_1^2
\end{equation}
for some $\gamma_1>0$, then the properties of $w_m$ imply that
\begin{align*}
\|w_m\|_{L^2(B(0,\rho)} &\geq \|g_m(t_m)u_m\|_{L^2(B(0,\rho)} - \|w_m-g_m(t_m)u_m\|_{L^2(B(0,\rho)} \\
&= \|g_m(t_m)u_m\|_{L^2(B(0,\rho)} +o(1)
\end{align*}
We remark that $\{g_m(t_m)\}$ must be bounded away from zero (otherwise $g_m(t_m)u_m \to 0$ and $c +o(1) = J(g_m(t_m)u_m) = o(1)$, which is impossible) and this yields
\[
\|w_m\|_{L^(B(0,\rho)} \geq \gamma_2 >0.
\]
We can easily check that $w_m$ tends to some $w$ weakly in $E^s$, and that $w$ solves (\ref{eq:3}) with $J(w)=c$.

To complete the proof, we must show that (\ref{eq:11}) is true. If not, along a subsequence, $\|u_m\|_{L^2(B(0,\rho)} \to 0$. But then
\begin{align*}
c+o(1) &= \max_{\theta \geq 0} J(\theta u_m) \geq c_\infty + \frac{1}{2} \int_{B(0,\rho)} \left(V(x)-V_\infty \right) |\phi^\infty(u_m)u_m(x)|^2 \, dx \\
&= c_\infty +o(1),
\end{align*}
i.e. $c \geq c_\infty$. The proof is complete under the stronger assumption (\ref{eq:9}).

Suppose now that 
\[
\liminf_{|x| \to +\infty} V(x)=V_\infty.
\]
Pick $\ge>0$ so that
\[
\liminf_{|x| \to +\infty} V(x) > V_\infty - \ge.
\]
We can apply the previous proof to the potential $V_\ge = V-\ge$: hence either $c$ is larger that the mountain-pass level for this new potential $V_\ge$, or $c$ is a critical value for $J$. In the first case, we conclude by letting $\ge \to 0$ and exploiting the continuity of the mountain-pass levels, Proposition \ref{prop:4.4}.
\end{proof}
\begin{remark}\label{rem:5.2}
The autonomous problem (\ref{eq:10}) was studied in \cite{FQT}, where it is shown that $J^\infty$ has a critical point of mountain-pass type.
When a solution $u \in W^{s,2}(\mathbb{R}^N)$ decays sufficiently fast at infinity and $f$ is independent of $x$, it is possible to prove (conjectured in \cite{DPV} and proved in the recent preprint \cite{Ros-Oton:2012uq}) that the following Pohozaev identity holds:
\[
\left( \frac{N}{2}-s \right) \|u\|^2 = N\int_{\mathbb{R}^N} F(u).
\]
We believe that the results of \cite{FQT} might be improved by using the same ideas of \cite{AzzPomp,PomSec}, in which a natural constraint is built by means of the former variational identity; we will investigate this direction in a forthcoming paper.
\end{remark}

The following is a typical existence result based on the previous Theorem.
\begin{theorem}
Assume that $f$ does not depend on $x$, and that
\begin{itemize}
\item[(V$_3$)] $\liminf_{|x| \to +\infty} V(x) = V_\infty$,
\item[(V$_4$)] $V \leq V_\infty$, but $V$ is not identically equal to $V_\infty$.
\end{itemize}
Then $c$ is a critical value for $J$.
\end{theorem}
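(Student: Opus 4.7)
The plan is to invoke Theorem~\ref{th:5.1}, which asserts the dichotomy: either $c$ is already a critical value of $J$, or else $c_\infty \leq c$. Our task therefore reduces to excluding the latter alternative by proving the strict inequality $c < c_\infty$.

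As a test trajectory I would use a ground state $u_\infty$ of the limiting problem $J^\infty$, whose existence is recalled in Remark~\ref{rem:5.2}. Since $f$ does not depend on $x$, the truncation argument sketched after Theorem~\ref{th:1} carries over to $J^\infty$ unchanged, so I will assume in addition that $u_\infty > 0$ everywhere on $\mathbb{R}^N$. In particular $u_\infty$ lies on the Nehari manifold associated to $J^\infty$ and, by the natural analogue of Lemma~\ref{lem:4.2},
\[
J^\infty(u_\infty) = \max_{\theta \geq 0} J^\infty(\theta u_\infty) = c_\infty.
\]

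The comparison now rests on the pointwise identity
\[
J(\theta u_\infty) = J^\infty(\theta u_\infty) + \frac{\theta^2}{2} \int \bigl( V(x) - V_\infty \bigr) |u_\infty(x)|^2 \, dx,
\]
valid for every $\theta \geq 0$. Since $V$ is continuous, $V \leq V_\infty$, and $V \not\equiv V_\infty$ by (V$_4$), there exists a non-empty open set on which $V < V_\infty$; combined with the strict positivity of $u_\infty$, this forces the correction integral to be \emph{strictly} negative. Consequently $J(\theta u_\infty) < J^\infty(\theta u_\infty)$ for every $\theta > 0$. Choosing $\theta^\star > 0$ to maximise $\theta \mapsto J(\theta u_\infty)$ along the ray $\mathbb{R}_{+} u_\infty$ (which exists by the mountain-pass geometry of $J$, combined with ($f_5$)), we deduce
\[
c \leq \max_{\theta \geq 0} J(\theta u_\infty) = J(\theta^\star u_\infty) < J^\infty(\theta^\star u_\infty) \leq \max_{\theta \geq 0} J^\infty(\theta u_\infty) = c_\infty,
\]
where the first inequality uses the Nehari characterisation of $c$ from Lemma~\ref{lem:4.2}. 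This rules out the second alternative in Theorem~\ref{th:5.1} and thus completes the proof.

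The only delicate point, and the main obstacle, is guaranteeing that the ground state $u_\infty$ can be taken strictly positive, so that the correction term is unambiguously nonzero. This is handled by transporting the positive-solution argument outlined after Theorem~\ref{th:1} (truncation $f^{+}$ together with the fractional maximum principle from \cite[2.2.28]{Silvestre}) to the autonomous functional $J^\infty$. An alternative, should one wish to avoid the maximum principle, would be to appeal to a unique continuation property for $\slap$ to preclude $u_\infty$ from vanishing on the open set $\{V<V_\infty\}$, but the positivity route is cleaner and stays within the techniques already developed in the paper.
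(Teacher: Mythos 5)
Your proof is correct and takes essentially the same route as the paper: use the dichotomy of Theorem~\ref{th:5.1}, compare $J$ and $J^\infty$ along the ray through a ground state of the autonomous limit problem, and exploit $V \leq V_\infty$ with $V \not\equiv V_\infty$ to force $c < c_\infty$, ruling out the second alternative. You are in fact slightly more careful than the paper on the step where the comparison integral must be \emph{strictly} positive: this requires the ground state not to vanish identically on the open set $\{V < V_\infty\}$, which the paper asserts without comment, whereas you explicitly supply it by taking a positive ground state (via the $f^{+}$-truncation and the fractional maximum principle), which is a genuine and welcome patch to the argument.
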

\begin{proof}
If $c$ is not a critical value, then $c \geq c_\infty$. By Remark \ref{rem:5.2}, we can fix a solution $w \in E^s$ of (\ref{eq:9}) of mountain-pass type. Therefore
\[
c_\infty = J^\infty(w) = \max_{\theta \geq 0} J^\infty (\theta w).
\]
If $\theta >0$, then
\[
J^\infty(\theta w) = J(\theta w) + \frac{1}{2} \int \left( V_\infty - V(x) \right) |\theta w(x)|^2 \, dx.
\]
Choosing, as usual, $\theta = \phi(w)$, we have
\begin{align*}
c_\infty &\geq J(\phi(w)w) + \frac{1}{2} \int \left( V_\infty - V(x) \right) |\phi(w) w(x)|^2 \, dx \\
&\geq c + \frac{1}{2} \int \left( V_\infty - V(x) \right) |\phi(w) w(x)|^2 \, dx > c.
\end{align*}
This contradiction shows that $c$ must be a critical level for $J$.
\end{proof}
We conclude with an existence result for a parametric equation. We will need some technicalities for the fractional laplacian introduces in the second Section.

We will study the equation
\begin{equation} \label{eq:12}
\slap u + V(\ge x) u = f(u),
\end{equation}
where $\ge$ is a positive \emph{small} parameter. For convenience, we will write $V_\ge(x)=V(\ge x)$.
\begin{theorem}
Suppose that (V$_1$), (V$_2$), ($f_1$--$f_5$) hold with $f$ independent of $x$, and in addition that
\begin{itemize}
\item[(V$_5$)] $V(0)<V_\infty$.
\end{itemize}
Under these assumptions, there is $\ge_0>0$ such that, for every $0<\ge < \ge_0$, equation (\ref{eq:12}) has a nontrivial solution.
\end{theorem}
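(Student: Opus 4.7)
The plan is to apply Theorem~\ref{th:5.1} to the rescaled functional $J_\ge$ associated with $V_\ge(x)=V(\ge x)$. Since $\inf_{\mathbb{R}^N} V_\ge=V_0>0$ and $\liminf_{|x|\to+\infty}V(\ge x)=\liminf_{|y|\to+\infty}V(y)\geq V_\infty$, the potential $V_\ge$ satisfies (V$_1$) and (V$_2$) with the same constants, and the autonomous problem at infinity associated to $J_\ge$ is unchanged. Theorem~\ref{th:5.1} then supplies the dichotomy: either $c(V_\ge)$ is a critical value of $J_\ge$ and we are done, or $c(V_\ge)\geq c_\infty$. To exclude the second alternative when $\ge$ is small, it suffices, by Lemma~\ref{lem:4.2}, to exhibit a single $u\in E^{s,\ge}\setminus\{0\}$ with $\max_{\theta\geq 0}J_\ge(\theta u)<c_\infty$.

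For such a test function, first fix a ground state $w\in W^{s,2}(\mathbb{R}^N)$ of $(-\Delta)^s w+V_\infty w=f(w)$ provided by Remark~\ref{rem:5.2}, so that $J^\infty(w)=c_\infty$ and $w$ lies on the Nehari manifold of $J^\infty$. Since $V$ is not assumed bounded, $w$ itself may fail to belong to $E^{s,\ge}$, so I would truncate by choosing $\chi\in C_0^\infty(\mathbb{R}^N)$ with $\chi\equiv 1$ on $B(0,1)$ and $\mathrm{supp}\,\chi\subset B(0,2)$, and setting $w_R(x)=\chi(x/R)w(x)$. By Lemma~\ref{lem:Palatucci}, $w_R\to w$ in $D^{s,2}$ as $R\to+\infty$, and dominated convergence yields $w_R\to w$ in $L^2$; together $w_R\to w$ in $W^{s,2}$, and each $w_R$ has compact support and thus lies in every $E^{s,\ge}$.

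The key identity is
\[
J_\ge(\theta w_R)=J^\infty(\theta w_R)+\frac{\theta^2}{2}\int_{\mathbb{R}^N}\bigl(V(\ge x)-V_\infty\bigr)|w_R(x)|^2\,dx.
\]
For $R$ fixed, continuity of $V$ at the origin and compactness of $\mathrm{supp}\,w_R\subset B(0,2R)$ give $V(\ge x)\to V(0)$ uniformly on $\mathrm{supp}\,w_R$ as $\ge\to 0$, so the integral tends to $(V(0)-V_\infty)\|w_R\|_2^2<0$ by (V$_5$). Letting next $R\to+\infty$ yields the limit profile $\theta\mapsto J^\infty(\theta w)+\tfrac{\theta^2}{2}(V(0)-V_\infty)\|w\|_2^2$, whose maximum is strictly smaller than $c_\infty$: indeed $J^\infty(\theta w)\leq c_\infty$ with equality only at $\theta=1$ by the Nehari characterization of $w$, and the added term is strictly negative for every $\theta>0$. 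Combining this with the continuity of the Nehari projection $\phi$ (the lemma preceding Lemma~\ref{lem:4.2}), applied both to $J_\ge$ and to the functional $J_0$ with constant potential $V(0)$, to obtain uniform boundedness of the maximizers $\theta_\ge$ of $\theta\mapsto J_\ge(\theta w_R)$, a two-parameter argument (first choose $R$ so large that $\max_\theta J_0(\theta w_R)<c_\infty-\delta$, then $\ge_0$ so small that $\max_\theta J_\ge(\theta w_R)<c_\infty$ for every $\ge\in(0,\ge_0)$) contradicts the second alternative of Theorem~\ref{th:5.1} and therefore yields the desired nontrivial solution.

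The principal obstacle is the non-locality of $(-\Delta)^s$: the convergence $\|(-\Delta)^{s/2}(\chi_R w)\|_2\to\|(-\Delta)^{s/2}w\|_2$ as $R\to+\infty$ is not available through a naive Leibniz manipulation, but is precisely the content of Lemma~\ref{lem:Palatucci} (with the commutator estimate of Lemma~\ref{lem:Palatucci-bis} available as a finer backup, should one need to split $(-\Delta)^{s/2}(\chi_R w)$ into a cut-off version of $(-\Delta)^{s/2}w$ plus a small remainder). Without such non-trivial truncation tools the whole argument would collapse, as $w$ itself need not lie in the energy space of the possibly unbounded rescaled potential $V_\ge$.
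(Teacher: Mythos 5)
Your proposal follows the same overall strategy as the paper: truncate the ground state $w$ of the limit problem by $w_R=\chi_R w$ (relying on Lemma~\ref{lem:Palatucci} for the convergence $w_R\to w$ in $D^{s,2}$), and use the sign condition (V$_5$) on the support of $w_R$ to show $c_\ge<c_\infty$, which rules out the second alternative of Theorem~\ref{th:5.1}. The bookkeeping is organised differently, though, in a way worth noting. The paper writes $J^\infty(\theta v)=J_\ge(\theta v)+\tfrac{\theta^2}{2}\int(V_\infty-V_\ge)|v|^2$, chooses $\theta=\phi_\ge(v)$, and needs a \emph{lower} bound $\phi_\ge(v)\geq\theta_0>0$, uniform in $\ge$ and $R$, to keep the resulting gap away from zero; establishing that lower bound is exactly where Lemma~\ref{lem:Palatucci-bis} (the commutator estimate) enters, because it must bound $\|(-\Delta)^{s/2}(\chi_R w)\|_2$ from \emph{below}. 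You instead estimate $\max_\theta J_\ge(\theta w_R)<c_\infty$ directly, working with the auxiliary constant-potential functional $J_0$ (with potential $V(0)$), and for that you need only an \emph{upper} bound on the $\theta$-maximiser, which follows easily from the superquadratic decay (via ($f_4$)) together with the uniform bound $\sup_{\mathrm{supp}\,w_R}V(\ge x)\leq\sup_{B(0,2R)}V$ for $\ge\leq 1$. Consequently your route bypasses Lemma~\ref{lem:Palatucci-bis} altogether, which is a mild simplification. Two places in your write-up that deserve to be made more explicit: (i) the strict inequality $\max_\theta J_0(\theta w)<c_\infty$ needs to be recorded (it follows because by ($f_5$) the maximiser $\theta_0^*$ is strictly positive and $J_0(\theta_0^*w)\leq c_\infty+\tfrac{(\theta_0^*)^2}{2}(V(0)-V_\infty)\|w\|_2^2<c_\infty$), and (ii) the uniform-in-$\ge$ bound on the $\theta$-maximiser should be stated as a separate step rather than attributed loosely to ``continuity of the Nehari projection,'' since $J_\ge$ is a varying family of functionals and the lemma you cite is for a single fixed one.
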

\begin{proof}
To highlight the presence of $\ge$, we write
\[
J_\ge (u) = \frac{1}{2} \int |\xi|^{2s} |\hat{u}(\xi)|^2 \, d\xi + \frac{1}{2} \int V_\ge (x) |u(x)|^2 \, dx - \int F(u(x))\, dx,
\]
and let $c_\ge$ be the corresponding mountain-pass level. It is clear that $c_\ge$ has the same characterization as in Lemma \ref{lem:4.2}. In view of Theorem \ref{th:5.1}, we want to exclude the possibility that $c \geq c_\ge$. Assume, by contradiction, that this inequality holds true, and fix, as before, a solution $w$ of equation (\ref{eq:10}). Pick a smooth function $\chi \colon [0,+\infty) \to [0,+\infty)$ such that
\begin{align*}
\chi(t) &= 1 \quad\text{if $0 \leq t \leq 1$} \\
|\chi'(t)| &\leq 1 \quad\text{for every $t$}
\end{align*}
and $\operatorname{supp}\chi$ is a compact interval.
For $R>0$, set $\chi_R(t) = \chi(t/R)$, and consider $v = \chi_R w$. By Lemma \ref{lem:Palatucci}, $v \to w$ strongly as $R \to +\infty$.

Given any $\bar{\theta} >0$,
\[
\max_{\theta \geq 0} J^\infty (\theta v) \geq J_\ge (\bar{\theta}v) + \frac{1}{2} \int_{\operatorname{supp} \chi_R} \left( V_\infty - V_\ge(x) \right) |\bar{\theta}v(x)|^2 \, dx.
\]
Choose $\bar{\theta}=\phi_\ge(v)$, where $\phi_\ge(v)$ is defined for $J_\ge$ as $\phi$ was defined for $J$; therefore
\[
\max_{\theta \geq 0} J^\infty (\theta v) \geq c_\ge + \frac{1}{2} \int_{\operatorname{supp} \chi_R} \left( V_\infty - V_\ge(x) \right) |\phi_\ge(v) v(x)|^2 \, dx.
\]
When $\ge \ll 1$, $V_\infty - V_\ge (x) \geq \frac{1}{2} \left( V_\infty - V(0) \right)$ in $\operatorname{supp}\chi_R$, and we get
\[
\max_{\theta \geq 0} J^\infty (\theta v) \geq c_\ge + \frac{1}{4} \left( V_\infty - V(0) \right) \phi_\ge(v)^2 \int_{\operatorname{supp} \chi_R} |v(x)|^2 \, dx.
\]
Recall that $\bar{\theta}=\phi_\ge(v)$ depend on $\ge$ and on $R$, so that we need to bound these quantity in a suitable way.

\medskip

\noindent\emph{Claim 1:} there exists $\theta_0>0$ such that  $\phi_\ge(v) \geq \theta_0$ for $\ge \ll 1$ and $R \gg 1$.

\medskip

\noindent\emph{Claim 2:} there exists a strictly positive function $R \mapsto \psi (R)$ such that $\psi(R) \to 0$ as $R \to +\infty$  and $\max_{\theta \geq 0} J^\infty (\theta v) \leq c_\infty + \psi(R)$.

\medskip

We take these claims for granted, and we finish the proof. Choose $R$ so large that
\[
\int_{\operatorname{supp}\chi_R} |v(x)|^2 \, dx \geq \frac{1}{2} \int |w(x)|^2 \, dx,
\]
which yields
\[
\max_{\theta \geq 0} J^\infty (\theta v) \geq c_\ge + \frac{1}{4} \left( V_\infty - V(0) \right) \theta_0^2 \int |w(x)|^2 \, dx.
\]
Choosing $R$ larger, if needed, we may also assume that 
\[
\psi(R) < \frac{1}{4} \left( V_\infty - V(0) \right) \theta_0^2 \int |w(x)|^2 \, dx;
\]
then $c_\infty > c_\ge$, the desired contradiction.

We now prove the two claims. Recall that $\phi_\ge (v)$ is characterized by the equation
\[
\phi_\ge(v)^2 \left( \int |\xi|^{2s} |\hat{v}(\xi)|^2 \, d\xi + \int V_\ge (x) |v(x)|^2 \, dx \right) = \int f(\psi_\ge(v)v(x)) \phi_\ge(v) v(x)\, dx.
\]
As we did previously, given $\eta>0$, there exists $A_\eta>0$ such that
\[
|f(s)| \leq \eta |s| + A_\eta |s|^{p} \quad\text{for all $s \in \mathbb{R}$}.
\]
Hence
\begin{equation*}
\phi_\ge(v)^2 \left( \int |\xi|^{2s} |\hat{v}(\xi)|^2 \, d\xi + \int V_0 (x) |v(x)|^2 \, dx \right) \leq \int \left( \eta \phi_\ge(v)^2 |v(x)|^2 + A_\eta |\phi_\ge(v)v(x)|^{p+1} \right) dx.
\end{equation*}
By definition,
\[
\int |v(x)|^{p+1} \, dx \leq \int |w(x)|^{p+1}\, dx.
\]
The fractional norm is more delicate to estimate. Recalling that $\chi_R w \to w$ strongly as $R \to +\infty$, we can use Lemma \ref{lem:Palatucci-bis} to deduce that, as $R \to +\infty$,
\[
\|(-\Delta)^{\frac{s}{2}} \left( \chi_R w \right) \|_2 = \|\chi_R (-\Delta)^{\frac{s}{2}} w \|_2 + o(1)
\]
and therefore
\[
\|(-\Delta)^{\frac{s}{2}} v \|_2 \geq \frac{1}{2} \|(-\Delta)^{\frac{s}{2}} w\|_2 +o(1).
\]
Therefore, for $R$ large enough,
\[
\int |\xi|^{2s} |\hat{v}(\xi)|^2 \, d\xi + \int \frac{V_0}{2} |v(x)|^2 \, dx \geq \frac{1}{2} \int |\xi|^{2s} |\hat{w}(\xi)|^2 \, d\xi +  \int \frac{V_0}{4} |w(x)|^2 \, dx
\]
We are ready to conclude: specialize $\eta = V_0/2$ so that
\[
\phi_\ge (v) \geq \left( \frac{\frac{1}{2} \int |\xi|^{2s} |\hat{w}(\xi)|^2 \, d\xi + \int \frac{V_0}{4} |w(x)|^2 \, dx}{A_\eta \int |w(x)|^{p+1}\, dx} \right)^{\frac{1}{p+1}},
\]
and call $\theta_0$ the right-hand side of the last estimate.

Finally, since
\[
\max_{\theta \geq 0} J^\infty (\phi^\infty (v)v) = c_\infty + J^\infty (\phi^\infty(v)v) - J^\infty (w)
\]
But $\chi_R w \to w$ in $E^s$ as $R \to +\infty$, so that $\phi^\infty(v) \to \phi^\infty(w)$. But $\phi^\infty(w)=1$, since $w$ solves (\ref{eq:9}); as a consequence
\[
J^\infty (\phi^\infty(v)v) - J^\infty (w) \to 0
\]
as $R \to +\infty$, and alsco Claim 2 is proved.
\end{proof}
\begin{remark}
Since $\slap$ is a pseudo-differential operator whose symbol is $|\xi|^{2s}$, it is easy to check that it scales as $\slap \to \ge^{2s} \slap$ under the change of variable $x \to \ge x$. This agrees with the usual scaling property for the local laplacian, i.e. $s=1$. Therefore (\ref{eq:12}) is a rescaling of the \emph{singularly perturbed equation}
\[
\ge^{2s} \slap v + V(x) v = f(u).
\]
Our previous theorem provides a (classical) solution when $\ge$ becomes small, but we cannot say that this solution \emph{concentrates} at some point as $\ge \to 0$. Single-peak (and multi-peak) solutions for fractional Schr\"{o}dinger equations are a very stimulating problem, but standard techniques that were developed for the local laplacian do not work out-of-the-box. Roughly speaking, these techniques heavily rely on blow-up and local estimates, and quite often need fine properties of solutions to the limiting problem at $\ge=0$. Non-degeneracy, information on the Morse index, and even uniqueness for the limiting equation are essentially still unknown in the fractional setting; the concept itself of \emph{spike} may need some explanation, since $\slap$ may ``kill'' bumps by averaging on the whole space.

From a more technical viewpoint, a major difficulty is that bound states of (\ref{eq:9}) decay slowly (see Subsection~\ref{subs:5.1}); there is the very little room to make good estimates at infinity.
\end{remark}

\section{Regularity} \label{subs:5.1}

Although we did not mention regularity in our existence results, it is possible to show that the solutions we have found are H\"{o}lder continuous and solve their equation pointwise. Since our equations do not have singularities, the proof of this fact is rather standard.

First of all, recall that the nonlinearity $f$ is sufficiently smooth, and in particular H\"{o}lder continuous. The Sobolev embedding theorem implies that any weak solution $u \in E^s$ of
\[
\slap u + V u = f(x,u)
\]
belongs to some space $L^q$. Then an application of the argument contained in \cite[page 312]{Labutin} shows that, for any ball $B$, $u \in W^{s+\eta,2}(B)$ for some $\eta>0$. Then Proposition 5.1 in \cite{Barrios} implies that $u \in L^\infty(B)$. We can now use the regularity theory of \cite{Cabre:2010fk}, and $u$ turns out to be locally H\"{o}lder continuous. As noticed in \cite{FQT}, moving the ball $B$ around, we get the global H\"{o}lder continuity of the solution $u$.

Finally, we explicitly mention that our solution decay polynomially fast at infinity. Theorem 3.1 of \cite{FQT} can be easily adapted to our setting, so that our solutions $u$ behave at infinity like
\[
\frac{1}{|x|^{N+2s}}.
\]
Of course, this is a specific feature of the fractional laplacian, which does not produce an exponential decay like the ordinary laplacian.

\nocite{*}
\bibliography{global}

%merlin.mbs aipnum4-1.bst 2010-07-25 4.21a (PWD, AO, DPC) hacked
%Control: key (0)
%Control: author (8) initials jnrlst
%Control: editor formatted (1) identically to author
%Control: production of article title (0) allowed
%Control: page (1) range
%Control: year (1) truncated
%Control: production of eprint (0) enabled
\begin{thebibliography}{30}%
\makeatletter
\providecommand \@ifxundefined [1]{%
 \@ifx{#1\undefined}
}%
\providecommand \@ifnum [1]{%
 \ifnum #1\expandafter \@firstoftwo
 \else \expandafter \@secondoftwo
 \fi
}%
\providecommand \@ifx [1]{%
 \ifx #1\expandafter \@firstoftwo
 \else \expandafter \@secondoftwo
 \fi
}%
\providecommand \natexlab [1]{#1}%
\providecommand \enquote  [1]{``#1''}%
\providecommand \bibnamefont  [1]{#1}%
\providecommand \bibfnamefont [1]{#1}%
\providecommand \citenamefont [1]{#1}%
\providecommand \href@noop [0]{\@secondoftwo}%
\providecommand \href [0]{\begingroup \@sanitize@url \@href}%
\providecommand \@href[1]{\@@startlink{#1}\@@href}%
\providecommand \@@href[1]{\endgroup#1\@@endlink}%
\providecommand \@sanitize@url [0]{\catcode `\\12\catcode `\$12\catcode
  `\&12\catcode `\#12\catcode `\^12\catcode `\_12\catcode `\%12\relax}%
\providecommand \@@startlink[1]{}%
\providecommand \@@endlink[0]{}%
\providecommand \url  [0]{\begingroup\@sanitize@url \@url }%
\providecommand \@url [1]{\endgroup\@href {#1}{\urlprefix }}%
\providecommand \urlprefix  [0]{URL }%
\providecommand \Eprint [0]{\href }%
\providecommand \doibase [0]{http://dx.doi.org/}%
\providecommand \selectlanguage [0]{\@gobble}%
\providecommand \bibinfo  [0]{\@secondoftwo}%
\providecommand \bibfield  [0]{\@secondoftwo}%
\providecommand \translation [1]{[#1]}%
\providecommand \BibitemOpen [0]{}%
\providecommand \bibitemStop [0]{}%
\providecommand \bibitemNoStop [0]{.\EOS\space}%
\providecommand \EOS [0]{\spacefactor3000\relax}%
\providecommand \BibitemShut  [1]{\csname bibitem#1\endcsname}%
\let\auto@bib@innerbib\@empty
%</preamble>
\bibitem [{\citenamefont {Laskin}(2002)}]{Laskin2002}%
  \BibitemOpen
  \bibfield  {author} {\bibinfo {author} {\bibfnamefont {N.}~\bibnamefont
  {Laskin}},\ }\bibfield  {title} {\enquote {\bibinfo {title} {Fractional
  {S}chr{\"o}dinger equation},}\ }\href {\doibase 10.1103/PhysRevE.66.056108}
  {\bibfield  {journal} {\bibinfo  {journal} {Phys. Rev. E (3)}\ }\textbf
  {\bibinfo {volume} {66}},\ \bibinfo {pages} {056108, 7} (\bibinfo {year}
  {2002})}\BibitemShut {NoStop}%
\bibitem [{\citenamefont {Laskin}(2000)}]{Laskin2000}%
  \BibitemOpen
  \bibfield  {author} {\bibinfo {author} {\bibfnamefont {N.}~\bibnamefont
  {Laskin}},\ }\bibfield  {title} {\enquote {\bibinfo {title} {Fractional
  quantum mechanics and {L}{\'e}vy path integrals},}\ }\href {\doibase
  10.1016/S0375-9601(00)00201-2} {\bibfield  {journal} {\bibinfo  {journal}
  {Phys. Lett. A}\ }\textbf {\bibinfo {volume} {268}},\ \bibinfo {pages}
  {298--305} (\bibinfo {year} {2000})}\BibitemShut {NoStop}%
\bibitem [{\citenamefont {Rabinowitz}(1992)}]{Rabinowitz}%
  \BibitemOpen
  \bibfield  {author} {\bibinfo {author} {\bibfnamefont {P.~H.}\ \bibnamefont
  {Rabinowitz}},\ }\bibfield  {title} {\enquote {\bibinfo {title} {On a class
  of nonlinear {S}chr{\"o}dinger equations},}\ }\href {\doibase
  10.1007/BF00946631} {\bibfield  {journal} {\bibinfo  {journal} {Z. Angew.
  Math. Phys.}\ }\textbf {\bibinfo {volume} {43}},\ \bibinfo {pages} {270--291}
  (\bibinfo {year} {1992})}\BibitemShut {NoStop}%
\bibitem [{\citenamefont {Felmer}, \citenamefont {Quaas},\ and\ \citenamefont
  {Tan}()}]{FQT}%
  \BibitemOpen
  \bibfield  {author} {\bibinfo {author} {\bibfnamefont {P.}~\bibnamefont
  {Felmer}}, \bibinfo {author} {\bibfnamefont {A.}~\bibnamefont {Quaas}}, \
  and\ \bibinfo {author} {\bibfnamefont {J.}~\bibnamefont {Tan}},\ }\href@noop
  {} {\enquote {\bibinfo {title} {Positive solutions of the nonlinear
  {S}chr\"{o}dinger equation with the fractional {L}aplacian},}\ }\bibinfo
  {note} {To appear on \emph{Proc. Roy. Soc. Edinburgh} Sect A}\BibitemShut
  {NoStop}%
\bibitem [{\citenamefont {Cheng}(2012)}]{Cheng}%
  \BibitemOpen
  \bibfield  {author} {\bibinfo {author} {\bibfnamefont {M.}~\bibnamefont
  {Cheng}},\ }\bibfield  {title} {\enquote {\bibinfo {title} {Bound state for
  the fractional {S}chr\"{o}dinger equation with unbounded potential},}\
  }\href@noop {} {\bibfield  {journal} {\bibinfo  {journal} {J. Math. Phys.}\
  }\textbf {\bibinfo {volume} {53}},\ \bibinfo {pages} {043507} (\bibinfo
  {year} {2012})}\BibitemShut {NoStop}%
\bibitem [{\citenamefont {Dipierro}, \citenamefont {Palatucci},\ and\
  \citenamefont {Valdinoci}(2013)}]{DPV}%
  \BibitemOpen
  \bibfield  {author} {\bibinfo {author} {\bibfnamefont {S.}~\bibnamefont
  {Dipierro}}, \bibinfo {author} {\bibfnamefont {G.}~\bibnamefont {Palatucci}},
  \ and\ \bibinfo {author} {\bibfnamefont {E.}~\bibnamefont {Valdinoci}},\
  }\bibfield  {title} {\enquote {\bibinfo {title} {Existence and symmetry
  results for a {S}chr\"{o}dinger type problem involving the fractional
  {L}aplacian},}\ }\href@noop {} {\bibfield  {journal} {\bibinfo  {journal} {Le
  matematiche}\ }\textbf {\bibinfo {volume} {68}} (\bibinfo {year}
  {2013})}\BibitemShut {NoStop}%
\bibitem [{\citenamefont {Frank}\ and\ \citenamefont {Lenzmann}()}]{FL}%
  \BibitemOpen
  \bibfield  {author} {\bibinfo {author} {\bibfnamefont {R.~L.}\ \bibnamefont
  {Frank}}\ and\ \bibinfo {author} {\bibfnamefont {E.}~\bibnamefont
  {Lenzmann}},\ }\href@noop {} {\enquote {\bibinfo {title} {Uniqueness and
  nondegeneracy of ground states for {$(-\Delta)^s Q+ Q - Q^{\alpha +1}=0$} in
  $\mathbb{R}$},}\ }\bibinfo {note} {To appear on \emph{Annals of
  Mathematics}}\BibitemShut {NoStop}%
\bibitem [{\citenamefont {Di~Nezza}, \citenamefont {Palatucci},\ and\
  \citenamefont {Valdinoci}(2012)}]{DNPV}%
  \BibitemOpen
  \bibfield  {author} {\bibinfo {author} {\bibfnamefont {E.}~\bibnamefont
  {Di~Nezza}}, \bibinfo {author} {\bibfnamefont {G.}~\bibnamefont {Palatucci}},
  \ and\ \bibinfo {author} {\bibfnamefont {E.}~\bibnamefont {Valdinoci}},\
  }\bibfield  {title} {\enquote {\bibinfo {title} {Hitchhiker's guide to the
  fractional sobolev spaces},}\ }\href@noop {} {\bibfield  {journal} {\bibinfo
  {journal} {Bull. Sci. Math.}\ } (\bibinfo {year} {2012})}\BibitemShut
  {NoStop}%
\bibitem [{\citenamefont {Palatucci}\ and\ \citenamefont
  {Pisante}(2012)}]{Palatucci}%
  \BibitemOpen
  \bibfield  {author} {\bibinfo {author} {\bibfnamefont {G.}~\bibnamefont
  {Palatucci}}\ and\ \bibinfo {author} {\bibfnamefont {A.}~\bibnamefont
  {Pisante}},\ }\href@noop {} {\enquote {\bibinfo {title} {Sobolev embeddings
  and concentration-compactness alternative for fractional sobolev spaces},}\ }
  (\bibinfo {year} {2012})\BibitemShut {NoStop}%
\bibitem [{\citenamefont {Taylor}(2003)}]{Taylor}%
  \BibitemOpen
  \bibfield  {author} {\bibinfo {author} {\bibfnamefont {M.}~\bibnamefont
  {Taylor}},\ }\bibfield  {title} {\enquote {\bibinfo {title} {Commutator
  estimates},}\ }\href@noop {} {\bibfield  {journal} {\bibinfo  {journal}
  {Proceedings of the American Mathematical Society}\ }\textbf {\bibinfo
  {volume} {131}},\ \bibinfo {pages} {1501--1507} (\bibinfo {year}
  {2003})}\BibitemShut {NoStop}%
\bibitem [{\citenamefont {Silvestre}(2005)}]{Silvestre}%
  \BibitemOpen
  \bibfield  {author} {\bibinfo {author} {\bibfnamefont {L.}~\bibnamefont
  {Silvestre}},\ }\emph {\bibinfo {title} {Regularity of the obstacle problem
  for a fractional power of the {L}aplace operator}},\ \href
  {http://www.math.uchicago.edu/~luis/preprints/luisdissreadable.pdf} {\bibinfo
  {type} {Dissertation}},\ \bibinfo  {school} {University of Texas at Austin}
  (\bibinfo {year} {2005})\BibitemShut {NoStop}%
\bibitem [{\citenamefont {Mawhin}\ and\ \citenamefont {Willem}(1989)}]{MaWi}%
  \BibitemOpen
  \bibfield  {author} {\bibinfo {author} {\bibfnamefont {J.}~\bibnamefont
  {Mawhin}}\ and\ \bibinfo {author} {\bibfnamefont {M.}~\bibnamefont
  {Willem}},\ }\href@noop {} {\emph {\bibinfo {title} {Critical point theory
  and {H}amiltonian systems}}},\ \bibinfo {series} {Applied Mathematical
  Sciences}, Vol.~\bibinfo {volume} {74}\ (\bibinfo  {publisher}
  {Springer-Verlag},\ \bibinfo {address} {New York},\ \bibinfo {year} {1989})\
  pp.\ \bibinfo {pages} {xiv+277}\BibitemShut {NoStop}%
\bibitem [{\citenamefont {Ros-Oton}\ and\ \citenamefont
  {Serra}(2012)}]{Ros-Oton:2012uq}%
  \BibitemOpen
  \bibfield  {author} {\bibinfo {author} {\bibfnamefont {X.}~\bibnamefont
  {Ros-Oton}}\ and\ \bibinfo {author} {\bibfnamefont {J.}~\bibnamefont
  {Serra}},\ }\bibfield  {title} {\enquote {\bibinfo {title} {The {P}ohozaev
  identity for the fractional {L}aplacian},}\ }\href
  {http://arxiv.org/abs/1207.5986v1} {\  (\bibinfo {year} {2012})},\ \Eprint
  {http://arxiv.org/abs/1207.5986v1} {1207.5986v1} \BibitemShut {NoStop}%
\bibitem [{\citenamefont {Azzollini}\ and\ \citenamefont
  {Pomponio}(2009)}]{AzzPomp}%
  \BibitemOpen
  \bibfield  {author} {\bibinfo {author} {\bibfnamefont {A.}~\bibnamefont
  {Azzollini}}\ and\ \bibinfo {author} {\bibfnamefont {A.}~\bibnamefont
  {Pomponio}},\ }\bibfield  {title} {\enquote {\bibinfo {title} {On the
  {S}chr{\"o}dinger equation in {$\Bbb R^N$} under the effect of a general
  nonlinear term},}\ }\href {\doibase 10.1512/iumj.2009.58.3576} {\bibfield
  {journal} {\bibinfo  {journal} {Indiana Univ. Math. J.}\ }\textbf {\bibinfo
  {volume} {58}},\ \bibinfo {pages} {1361--1378} (\bibinfo {year}
  {2009})}\BibitemShut {NoStop}%
\bibitem [{\citenamefont {Pomponio}\ and\ \citenamefont
  {Secchi}(2010)}]{PomSec}%
  \BibitemOpen
  \bibfield  {author} {\bibinfo {author} {\bibfnamefont {A.}~\bibnamefont
  {Pomponio}}\ and\ \bibinfo {author} {\bibfnamefont {S.}~\bibnamefont
  {Secchi}},\ }\bibfield  {title} {\enquote {\bibinfo {title} {A note on
  coupled nonlinear {S}chr{\"o}dinger systems under the effect of general
  nonlinearities},}\ }\href {\doibase 10.3934/cpaa.2010.9.741} {\bibfield
  {journal} {\bibinfo  {journal} {Commun. Pure Appl. Anal.}\ }\textbf {\bibinfo
  {volume} {9}},\ \bibinfo {pages} {741--750} (\bibinfo {year}
  {2010})}\BibitemShut {NoStop}%
\bibitem [{\citenamefont {Labutin}(2006)}]{Labutin}%
  \BibitemOpen
  \bibfield  {author} {\bibinfo {author} {\bibfnamefont {D.~A.}\ \bibnamefont
  {Labutin}},\ }\bibfield  {title} {\enquote {\bibinfo {title} {Critical
  regularity for elliptic equations from {L}ittlewood-{P}aley theory},}\ }\href
  {\doibase 10.1007/s00526-005-0355-4} {\bibfield  {journal} {\bibinfo
  {journal} {Calc. Var. Partial Differential Equations}\ }\textbf {\bibinfo
  {volume} {27}},\ \bibinfo {pages} {311--320} (\bibinfo {year}
  {2006})}\BibitemShut {NoStop}%
\bibitem [{\citenamefont {Barrios}\ \emph {et~al.}(2012)\citenamefont
  {Barrios}, \citenamefont {Colorado}, \citenamefont {de~Pablo},\ and\
  \citenamefont {S{\'a}nchez}}]{Barrios}%
  \BibitemOpen
  \bibfield  {author} {\bibinfo {author} {\bibfnamefont {B.}~\bibnamefont
  {Barrios}}, \bibinfo {author} {\bibfnamefont {E.}~\bibnamefont {Colorado}},
  \bibinfo {author} {\bibfnamefont {A.}~\bibnamefont {de~Pablo}}, \ and\
  \bibinfo {author} {\bibfnamefont {U.}~\bibnamefont {S{\'a}nchez}},\
  }\bibfield  {title} {\enquote {\bibinfo {title} {On some critical problems
  for the fractional {L}aplacian operator},}\ }\href {\doibase
  10.1016/j.jde.2012.02.023} {\bibfield  {journal} {\bibinfo  {journal} {J.
  Differential Equations}\ }\textbf {\bibinfo {volume} {252}},\ \bibinfo
  {pages} {6133--6162} (\bibinfo {year} {2012})}\BibitemShut {NoStop}%
\bibitem [{\citenamefont {Cabre}\ and\ \citenamefont
  {Sire}(2010)}]{Cabre:2010fk}%
  \BibitemOpen
  \bibfield  {author} {\bibinfo {author} {\bibfnamefont {X.}~\bibnamefont
  {Cabre}}\ and\ \bibinfo {author} {\bibfnamefont {Y.}~\bibnamefont {Sire}},\
  }\bibfield  {title} {\enquote {\bibinfo {title} {Nonlinear equations for
  fractional laplacians {I}: Regularity, maximum principles, and hamiltonian
  estimates},}\ }\href {http://arxiv.org/abs/1012.0867v2} {\  (\bibinfo {year}
  {2010})},\ \Eprint {http://arxiv.org/abs/1012.0867v2} {1012.0867v2}
  \BibitemShut {NoStop}%
\bibitem [{\citenamefont {Li}(1997)}]{YYL}%
  \BibitemOpen
  \bibfield  {author} {\bibinfo {author} {\bibfnamefont {Y.}~\bibnamefont
  {Li}},\ }\bibfield  {title} {\enquote {\bibinfo {title} {On a singularly
  perturbed elliptic equation},}\ }\href@noop {} {\bibfield  {journal}
  {\bibinfo  {journal} {Adv. Differential Equations}\ }\textbf {\bibinfo
  {volume} {2}},\ \bibinfo {pages} {955--980} (\bibinfo {year}
  {1997})}\BibitemShut {NoStop}%
\bibitem [{\citenamefont {Ambrosetti}, \citenamefont {Badiale},\ and\
  \citenamefont {Cingolani}(1997)}]{ABC}%
  \BibitemOpen
  \bibfield  {author} {\bibinfo {author} {\bibfnamefont {A.}~\bibnamefont
  {Ambrosetti}}, \bibinfo {author} {\bibfnamefont {M.}~\bibnamefont {Badiale}},
  \ and\ \bibinfo {author} {\bibfnamefont {S.}~\bibnamefont {Cingolani}},\
  }\bibfield  {title} {\enquote {\bibinfo {title} {Semiclassical states of
  nonlinear {S}chr{\"o}dinger equations},}\ }\href {\doibase
  10.1007/s002050050067} {\bibfield  {journal} {\bibinfo  {journal} {Arch.
  Rational Mech. Anal.}\ }\textbf {\bibinfo {volume} {140}},\ \bibinfo {pages}
  {285--300} (\bibinfo {year} {1997})}\BibitemShut {NoStop}%
\bibitem [{\citenamefont {Ambrosetti}, \citenamefont {Malchiodi},\ and\
  \citenamefont {Secchi}(2001)}]{AMS}%
  \BibitemOpen
  \bibfield  {author} {\bibinfo {author} {\bibfnamefont {A.}~\bibnamefont
  {Ambrosetti}}, \bibinfo {author} {\bibfnamefont {A.}~\bibnamefont
  {Malchiodi}}, \ and\ \bibinfo {author} {\bibfnamefont {S.}~\bibnamefont
  {Secchi}},\ }\bibfield  {title} {\enquote {\bibinfo {title} {Multiplicity
  results for some nonlinear {S}chr{\"o}dinger equations with potentials},}\
  }\href {\doibase 10.1007/s002050100152} {\bibfield  {journal} {\bibinfo
  {journal} {Arch. Ration. Mech. Anal.}\ }\textbf {\bibinfo {volume} {159}},\
  \bibinfo {pages} {253--271} (\bibinfo {year} {2001})}\BibitemShut {NoStop}%
\bibitem [{\citenamefont {Coti~Zelati}\ and\ \citenamefont
  {Nolasco}(2012)}]{CZN}%
  \BibitemOpen
  \bibfield  {author} {\bibinfo {author} {\bibfnamefont {V.}~\bibnamefont
  {Coti~Zelati}}\ and\ \bibinfo {author} {\bibfnamefont {M.}~\bibnamefont
  {Nolasco}},\ }\href {http://arxiv.org/abs/1202.2446} {\enquote {\bibinfo
  {title} {Ground states for pseudo-relativistic {H}artree equations of
  critical type},}\ } (\bibinfo {year} {Preprint 2012})\BibitemShut {NoStop}%
\bibitem [{\citenamefont {Coti~Zelati}\ and\ \citenamefont
  {Nolasco}(2011)}]{CZNrend}%
  \BibitemOpen
  \bibfield  {author} {\bibinfo {author} {\bibfnamefont {V.}~\bibnamefont
  {Coti~Zelati}}\ and\ \bibinfo {author} {\bibfnamefont {M.}~\bibnamefont
  {Nolasco}},\ }\bibfield  {title} {\enquote {\bibinfo {title} {Existence of
  ground states for nonlinear, pseudorelativistic {S}chr\"{o}dinger
  equations},}\ }\href@noop {} {\bibfield  {journal} {\bibinfo  {journal}
  {Rend. Lincei Mat. Appl.}\ }\textbf {\bibinfo {volume} {22}},\ \bibinfo
  {pages} {51--72} (\bibinfo {year} {2011})}\BibitemShut {NoStop}%
\bibitem [{\citenamefont {Lenzmann}(2009)}]{EL}%
  \BibitemOpen
  \bibfield  {author} {\bibinfo {author} {\bibfnamefont {E.}~\bibnamefont
  {Lenzmann}},\ }\bibfield  {title} {\enquote {\bibinfo {title} {Uniqueness of
  ground states for pseudorelativistic {H}artree equations},}\ }\href@noop {}
  {\bibfield  {journal} {\bibinfo  {journal} {Analysis and PDE}\ }\textbf
  {\bibinfo {volume} {2}} (\bibinfo {year} {2009})}\BibitemShut {NoStop}%
\bibitem [{\citenamefont {Ambrosetti}\ and\ \citenamefont
  {Malchiodi}(2006)}]{AM}%
  \BibitemOpen
  \bibfield  {author} {\bibinfo {author} {\bibfnamefont {A.}~\bibnamefont
  {Ambrosetti}}\ and\ \bibinfo {author} {\bibfnamefont {A.}~\bibnamefont
  {Malchiodi}},\ }\href@noop {} {\emph {\bibinfo {title} {Perturbation methods
  and semilinear elliptic problems on {${\bf R}^n$}}}},\ \bibinfo {series}
  {Progress in Mathematics}, Vol.\ \bibinfo {volume} {240}\ (\bibinfo
  {publisher} {Birkh\"auser Verlag},\ \bibinfo {address} {Basel},\ \bibinfo
  {year} {2006})\ pp.\ \bibinfo {pages} {xii+183}\BibitemShut {NoStop}%
\bibitem [{\citenamefont {Caffarelli}\ and\ \citenamefont
  {Silvestre}(2007)}]{CS}%
  \BibitemOpen
  \bibfield  {author} {\bibinfo {author} {\bibfnamefont {L.}~\bibnamefont
  {Caffarelli}}\ and\ \bibinfo {author} {\bibfnamefont {L.}~\bibnamefont
  {Silvestre}},\ }\bibfield  {title} {\enquote {\bibinfo {title} {An extension
  problem related to the fractional {L}aplacian},}\ }\href {\doibase
  10.1080/03605300600987306} {\bibfield  {journal} {\bibinfo  {journal} {Comm.
  Partial Differential Equations}\ }\textbf {\bibinfo {volume} {32}},\ \bibinfo
  {pages} {1245--1260} (\bibinfo {year} {2007})}\BibitemShut {NoStop}%
\bibitem [{\citenamefont {Frank}(2011)}]{F}%
  \BibitemOpen
  \bibfield  {author} {\bibinfo {author} {\bibfnamefont {R.~L.}\ \bibnamefont
  {Frank}},\ }\href@noop {} {\enquote {\bibinfo {title} {On the uniqueness of
  ground states of non-local equations},}\ } (\bibinfo {year} {2011}),\
  \bibinfo {note} {preprint}\BibitemShut {NoStop}%
\bibitem [{\citenamefont {Berti}\ and\ \citenamefont {Bolle}(1999)}]{BB}%
  \BibitemOpen
  \bibfield  {author} {\bibinfo {author} {\bibfnamefont {M.}~\bibnamefont
  {Berti}}\ and\ \bibinfo {author} {\bibfnamefont {P.}~\bibnamefont {Bolle}},\
  }\bibfield  {title} {\enquote {\bibinfo {title} {Homoclinics and chaotic
  behavior for perturbed second order systems},}\ }\href@noop {} {\bibfield
  {journal} {\bibinfo  {journal} {Annali di Matematica Pura ed applicata}\
  }\textbf {\bibinfo {volume} {CLXXVI}},\ \bibinfo {pages} {323--378} (\bibinfo
  {year} {1999})}\BibitemShut {NoStop}%
\bibitem [{\citenamefont {Cingolani}, \citenamefont {Secchi},\ and\
  \citenamefont {Squassina}(2010)}]{CSS}%
  \BibitemOpen
  \bibfield  {author} {\bibinfo {author} {\bibfnamefont {S.}~\bibnamefont
  {Cingolani}}, \bibinfo {author} {\bibfnamefont {S.}~\bibnamefont {Secchi}}, \
  and\ \bibinfo {author} {\bibfnamefont {M.}~\bibnamefont {Squassina}},\
  }\bibfield  {title} {\enquote {\bibinfo {title} {Semi-classical limit for
  {S}chr\"odinger equations with magnetic field and {H}artree-type
  nonlinearities},}\ }\href {\doibase 10.1017/S0308210509000584} {\bibfield
  {journal} {\bibinfo  {journal} {Proc. Roy. Soc. Edinburgh Sect. A}\ }\textbf
  {\bibinfo {volume} {140}},\ \bibinfo {pages} {973--1009} (\bibinfo {year}
  {2010})}\BibitemShut {NoStop}%
\bibitem [{\citenamefont {Mugnai}()}]{Mu}%
  \BibitemOpen
  \bibfield  {author} {\bibinfo {author} {\bibfnamefont {D.}~\bibnamefont
  {Mugnai}},\ }\href {http://arxiv.org/abs/1206.0853} {\enquote {\bibinfo
  {title} {The pseudorelativistic {H}artree equation with a general
  nonlinearity: existence, non existence and variational identities},}\
  }\bibinfo {note} {Preprint 2012}\BibitemShut {NoStop}%
\end{thebibliography}%

    \end{document}